\documentclass{amsart}

\usepackage{amssymb}
\usepackage{amsthm}
\usepackage{amsfonts}
\usepackage{mathrsfs}
\usepackage{amsmath}
\usepackage{extarrows}

\newtheorem{theorem}{Theorem}[section]
\newtheorem{corollary}[theorem]{Corollary}
\newtheorem{lemma}[theorem]{Lemma}
\newtheorem{proposition}[theorem]{Proposition}

\numberwithin{equation}{section}

\newcommand{\RePt}{\mathrm{Re}\,}
\newcommand{\ImPt}{\mathrm{Im}\,}
\newcommand{\ball}{\mathbb{B}}

\newcommand{\calU}{\mathcal{U}}

\newcommand{\calS}{\mathcal{S}}
\newcommand{\bfi}{\mathbf{i}}

\newcommand{\bfrho}{\boldsymbol{\rho}}

\newcommand{\bbC}{\mathbb{C}}

\begin{document}

\title[Compactness and essential norm]{Compactness and the essential norm on Bergman spaces of the Siegel upper half-space}

\author{Peiyao Li}
\email{peiyaoli@hainanu.edu.cn}
\address{School of Mathematics and Statistics, Hainan University, Haikou, Hainan 570228,
People's Republic of China.}

\author{Congwen Liu}
\email{cwliu@ustc.edu.cn}
\address{CAS Wu Wen-Tsun Key Laboratory of Mathematics,
School of Mathematical Sciences,
University of Science and Technology of China,
Hefei, Anhui 230026,
People's Republic of China}
\thanks{The second author was supported by the National Natural Science Foundation of China (12371084).}

\author{Jiajia Si}
\email{sijiajia@mail.ustc.edu.cn}
\address{School of Mathematics and Statistics, Hainan University, Haikou, Hainan 570228,
People's Republic of China.}
\thanks{The third author was supported by the Hainan Provincial Natural Science Foundation of China (124YXQN413) and the Nanhai New Star Project (2025NHXX203).}

\subjclass[2010]{Primary 32A36; Secondary 47B35}

\begin{abstract}
In this paper, we characterize compactness in terms of the Berezin transform and obtain an essential norm estimate for operators in the Toeplitz algebra on Bergman spaces of the Siegel upper half-space.
\end{abstract}

\keywords{Compactness; Berezin transform; essential norm; Toeplitz algebra; Siegel upper half-space}

\maketitle

\section{Introduction}

Axler and Zheng \cite{AZ98} proved that a finite sum of finite products of Toeplitz operators with bounded symbols on the Bergman space of the unit disc is compact if and only if its Berezin transform vanishes at the boundary. 
Extensions to other setting appear in \cite{CN09,CSZ18,Eng99}.
On the unit ball, Suárez \cite{Sua07} used a localization property to extend this criterion to the Toeplitz algebra, later generalized to various function spaces \cite{BI12,MSW13,MW14a}. Mitkovski and Wick \cite{MW14} simplified the proof by imposing an integrability condition on translates of the reproducing kernel: if a linear operator $T$ satisfies
\[
\sup_{z\in\ball} \|U_zT k_z\|_{L^p(\ball)} <\infty \quad\text{and}\quad \sup_{z\in\ball} \|U_zT^* k_z\|_{L^p(\ball)} <\infty
\]
for some $p>\frac{4-\kappa}{2-\kappa}$
(here and throughout, $\ball$ denotes the unit ball of $\bbC^n$),
then the localization property follows directly.
This approach has been widely extended \cite{HLW18,HW20,Isr15,IMW15}.

On the Siegel upper half-space $\calU$ of $\bbC^n$, there are two escape regimes (approach to the Euclidean boundary and escape to infinity) preventing the above condition from giving uniform control.
Inspired by  \cite{CN09}, we instead impose the following condition
\[
\sup_{z\in\calU} |T_z K_{\bfi}|\in L_{\alpha/p^\prime}^1(\calU)\quad \text{and} \quad \sup_{z\in\calU} |T_z^* K_{\bfi}|\in L_{\alpha/p}^1(\calU)
\]
for some $-1<\alpha<0$ (see \eqref{eq:essenstial}). It yields the localization estimate (Proposition \ref{prop:localization}) and is satisfied by finite Toeplitz products (Lemma \ref{lem:supS_z}).

Although $\calU$ is biholomorphic to $\ball$, 
the result for $p\ne2$ is not a formal transfer of the ball results cited above.
Let $J_C \Phi$ be the complex Jacobian determinant of the Cayley transform
$\Phi:\ball\to\calU$, and fix a holomorphic branch of $J_\Phi^{2/p}$.  The map
\[
 V_p:A^p(\calU)\longrightarrow A^p(\ball),\qquad
 V_pf=(J_C \Phi)^{2/p}(f\circ\Phi),
\]
is an isometry, however, 
\[
 V_pP_{\calU}V_p^{-1}
 =M_{(J_ C\Phi)^{2/p-1}}P_{\ball}M_{(J_C\Phi)^{1-2/p}}.
\]
Only for $p=2$ do the two multipliers disappear. 
Thus, the Bergman projection and its Toeplitz algebra are not conjugated to their standard ball
counterparts when $p\ne2$.

We now formulate our main result.  The Siegel upper half-space of $\bbC^n$ is
\[
 \calU=\{z\in\bbC^n:\bfrho(z)>0\},
\]
where $\bfrho(z)=\ImPt z_n-|z'|^2$, and we write
\[
 z=(z',z_n),\qquad z'=(z_1,\ldots,z_{n-1})\in\bbC^{n-1},\quad z_n\in\bbC.
\]
For $1<p<\infty$, let $L^p=L^p(\calU,dV)$, with
\[
 \|f\|_p=\left(\int_{\calU} |f(z)|^p \, dV(z)\right)^{1/p},
\]
and let $A^p$ be its closed subspace of holomorphic functions.  The Bergman
projection is
\[
 Pf(z)=\int_{\calU}K(z,w)f(w)\, d V(w),
\]
where
\[
K(z,w)= \frac {n!}{4\pi^{n}}\left[\frac {i}{2} (\overline{w}_{n}-z_{n})
- z^{\prime} \cdot \overline{w^{\prime}} \right]^{-n-1};
\] 
The projection is bounded from
$L^p$ onto $A^p$; see \cite{CR80}.

Set $K_z=K(\cdot,z)$ and $k_z^{(p)}=K_z/\|K_z\|_p$.  
For a bounded linear operator $T$ on $A^p$, its Berezin transform is given by 
\[
\widetilde{T}(z) = \langle Tk_z^{(p)}, k_z^{(p^\prime)}\rangle.
\]
Here and in what follows, $p^{\prime}$ always denotes the conjugate of $p$, i.e., $p^{\prime}=p/(p-1)$.
Thus $|\widetilde{T}(z)|\leq \|T\|$.  The Berezin transform is injective by the
standard polarization argument and the density of the kernel span.

For $u\in L^{\infty}$, let $M_u$ be the multiplication operator by $u$ and set $T_u=PM_u$.
Clearly, $T_u$ is bounded on $A^p$ for $1<p<\infty$.
The Toeplitz algebra $\mathcal{T}_p$ on $A^p$ is the closed subalgebra generated by $\{T_u:u\in L^{\infty} \}$, i.e.,
\[
\mathcal{T}_p=\text{closure} \left\{  \sum_{i=1}^M T_{u_{i1}}\cdots T_{u_{iN_i}}: u_{ij}\in L^\infty  \right\},
\]
where the closure is taken with respect to the operator norm on $A^p$.
Our main result is the following.
\begin{theorem}\label{thm:main}
Let $1<p<\infty$ and $T\in\mathcal{T}_p$. Then 
\begin{itemize}
\item[(i)] $\|T\|_{e, A^p\to A^p} \simeq \sup\limits_{\|f\|_p\leq 1} \limsup\limits_{z\to\partial\widehat{\calU}} \|T_z f\|_p$.
\item[(ii)] $T$ is compact if and only if $\widetilde{T}(z)\to 0$ as $z\to\partial\widehat{\calU}$.
\end{itemize}
\end{theorem}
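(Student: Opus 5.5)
The overall strategy is the Mitkovski--Wick / Suárez localization scheme, adapted to $\calU$ through the condition \eqref{eq:essenstial}. The setting is as follows. For $z\in\calU$, let $\varphi_z$ be the automorphism of $\calU$ (a Heisenberg translation composed with a dilation) that sends $\bfi=(0',i)$ to $z$. Let $U_z f=(f\circ\varphi_z)\,(J_C\varphi_z)^{2/p}$ be the induced isometry of $A^p$ and $L^p$, and set $T_z=U_zTU_z^{-1}$. Here $\partial\widehat{\calU}$ is the boundary of $\calU$ in its one-point compactification, so $z\to\partial\widehat{\calU}$ covers both escape regimes at once. Since $U_z$ commutes with $P$ up to the multipliers that are built into the definition, $T_z$ is again an operator on $A^p$ with $\|T_z\|=\|T\|$, and $(T_u)_z=T_{u\circ\varphi_z}$.

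I would proceed in four steps.

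\textbf{Step 1 (density reduction).} By Lemma \ref{lem:supS_z}, finite sums of finite products $S$ of Toeplitz operators satisfy \eqref{eq:essenstial}. Hence the localization estimate of Proposition \ref{prop:localization} holds for every such $S$. Concretely, for every $\varepsilon>0$ there is $r>0$ and a covering $\{D_j\}$ of $\calU$ by Bergman balls of radius $r$, with bounded overlap of the enlargements $F_j\supset D_j$, such that
\[
\Bigl\|S-\sum_j M_{1_{D_j}}SPM_{1_{F_j}}\Bigr\|_{A^p\to L^p}<\varepsilon .
\]
Both sides of (i) are continuous in operator norm with constants independent of $T$, so a general $T\in\mathcal{T}_p$ is handled by approximating it with such $S$.

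\textbf{Step 2 (upper bound in (i)).} Let $R_N=\sum_{j>N}M_{1_{D_j}}SPM_{1_{F_j}}$, where $D_1,\dots,D_N$ exhaust a large compact set. The remaining sum $\sum_{j\le N}$ is compact, because it maps into functions localized on a compact subset of $\calU$, where Bergman functions are controlled (Montel). Therefore $\|S\|_e\lesssim \limsup_N\|R_N\|+\varepsilon$. For each $j$, I would write
\[
\|M_{1_{D_j}}SPM_{1_{F_j}}f\|_p=\|M_{1_{D_j}}U_{z_j}^{-1}S_{z_j}U_{z_j}PM_{1_{F_j}}f\|_p,
\]
where $z_j$ is the centre of $D_j$. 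Using bounded overlap and the $\ell^p$-summation over $j$, this gives $\|R_N\|\lesssim \sup_{j>N}\sup_{\|g\|\le1}\|S_{z_j}g\|_p$, up to the localization. This is the standard argument from \cite{MW14}. Since $z_j\to\partial\widehat{\calU}$ as $j\to\infty$, we obtain $\|S\|_e\lesssim\sup_{\|f\|\le1}\limsup_{z\to\partial\widehat{\calU}}\|S_zf\|_p$. To exchange the sup and limsup, I would use the weak-$*$ compactness of $\{T_z\}$: along a net $z_\gamma\to\partial\widehat{\calU}$, $T_{z_\gamma}\to T_\infty$ in the strong operator topology. I expect this SOT convergence to follow from the kernel integrability \eqref{eq:essenstial} through a dominated convergence argument on $T_zK_w$.

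\textbf{Step 3 (lower bound in (i)).} For compact $K$, we have $\|(T+K)_zf\|_p\ge\|T_zf\|_p-\|K U_z^{-1}f\|_p$. The second term tends to $0$ because $U_z^{-1}f\to0$ weakly as $z\to\partial\widehat{\calU}$, and compact operators turn weak convergence into norm convergence. Since $\|(T+K)_z\|=\|T+K\|$, taking the infimum over $K$ gives the lower bound.

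\textbf{Step 4 (proof of (ii)).} Necessity holds because $k_z^{(p)}\to0$ weakly. For sufficiency, $\widetilde{T_z}(w)=\widetilde T(\varphi_z(w))$. If $\widetilde T\to0$ at $\partial\widehat{\calU}$, then every SOT limit $T_\infty$ from Step 2 has vanishing Berezin transform, so $T_\infty=0$ by injectivity of the Berezin transform. Hence $T_zf\to0$ for every $f$, and (i) gives $\|T\|_e=0$.

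The main obstacle is Step 2. It requires justifying the SOT compactness of $\{T_z\}$ uniformly in both escape regimes, and transferring the ball-type bookkeeping to $\calU$, where the weights $(J_C\varphi_z)^{2/p}$ are not unimodular for $p\ne2$. I would first try to verify directly that the translated kernels $U_zK_w$ converge locally uniformly, using the explicit Heisenberg and dilation formulas.
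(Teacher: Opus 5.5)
Steps 1 and 3 match the paper. Step 2, the upper bound in (i) and the core of the theorem, has a genuine gap.

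\textbf{The gap in Step 2.} The reduction $\|R_N\|\lesssim\sup_{j>N}\sup_{\|g\|_p\le1}\|S_{z_j}g\|_p$ carries no information. Since $\|S_z\|=\|S\|$ for every $z$, the right-hand side is just $\|S\|$. Passing from it to $\sup_{\|f\|_p\le1}\limsup_{z\to\partial\widehat{\calU}}\|S_zf\|_p$ is an exchange of $\sup$ and $\limsup$ that is false in general: for compact $S$ the latter quantity is $0$.

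SOT-compactness of $\{S_z\}$ cannot supply this exchange. SOT convergence $S_{z_\gamma}\to S_\infty$ is uniform only on norm-compact sets, whereas $\sup_{\|g\|_p\le1}\|S_{z_\gamma}g\|_p=\|S\|$ for every $\gamma$, even when $S_\infty=0$.

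\textbf{The missing idea.} The localization must be used a second time, to show that the translated test functions lie in a norm-compact set. In the paper's notation ($F_j$ the partition, $G_j$ its $r$-neighbourhood, i.e.\ your enlargement), the steps are as follows.
\begin{itemize}
\item The normalized piece $g_j=PM_{\chi_{G_j}}f_j/\|M_{\chi_{G_j}}f_j\|_p$, with $f_j$ near-extremal, is written as $\int_{G_j}a_j(w)k^{(p)}_w\,d\lambda(w)$, where $\int_{G_j}|a_j|^p\,d\lambda=C_p^p$.
\item One has $g_j=U^p_{z_j}h_j$, where $h_j$ is the analogous superposition over $\sigma_{z_j}(G_j)\subset D(\bfi,l_r)$, a ball whose radius depends only on $r$.
\item H\"older's inequality with Lemmas \ref{blem} and \ref{lem:volBergmanball} gives $|h_j|\lesssim_r|\bfrho(\cdot,\bfi)|^{-(n+1)}$, uniformly in $j$ and $f_j$. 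This majorant lies in $L^p$ by Lemma \ref{lem:FRestimate}.
\item Hence a subsequence $h_{j_k}$ converges in norm to some $h$ (weak limit, then pointwise convergence, then dominated convergence).
\item Then $\|Sg_{j_k}\|_p=\|S_{z_{j_k}}h_{j_k}\|_p\le\|S\|\,\|h_{j_k}-h\|_p+\|S_{z_{j_k}}h\|_p$, which yields the bound $\|P\|\sup_{\|f\|_p\le1}\limsup_{z\to\partial\widehat{\calU}}\|S_zf\|_p$.
\end{itemize}
No SOT limits are needed for (i).

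\textbf{Secondary points.}
\begin{itemize}
\item Your expectation that SOT-compactness follows from \eqref{eq:essenstial} by dominated convergence does not hold up. \eqref{eq:essenstial} dominates $T_zK_{\bfi}$ only in $L^1_{\alpha/p'}$, not in $L^p$. Locally uniform convergence of translated kernels gives no control of $L^p$ tails.
\item What Step 4 actually needs is the uniform $L^p$ envelope of Lemma \ref{lem:supS_z}(a): $\sup_z|S_zf|\in L^p$ for $f\in\calS_{n+1}$. Combined with the WOT statement of Lemma \ref{lem:B(T)inC_0}, locally uniform convergence on $D(\bfi,r)$, and approximation of $T$ by $S$, this gives $\|T_zf\|_p\to0$. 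That is the paper's argument, and with this input your Step 4 is fine.
\item The overlap constant $N$ must be independent of $r$, because $r\to\infty$ as $\varepsilon\to0$. Covers by Bergman balls of radius comparable to $r$ do not give this in hyperbolic geometry; this is why the paper uses the partition of Lemma \ref{lem:covering}.
\item The weights $(J_C\varphi_z)^{2/p}$ are not an obstacle. Here $\varphi_z=\sigma_z^{-1}$ is affine with constant Jacobian $\bfrho(z)^{(n+1)/2}$, so $U_z$ commutes with $P$ exactly.
\end{itemize}
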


Here $\widehat{\calU}:=\overline{\calU}\cup\{\infty\}$
and $\partial\widehat{\calU}:=\widehat{\calU}\setminus\calU=\{z:\bfrho(z)=0\}\cup\{\infty\}$.
The normalized translate $T_z$ is defined in Subsection 2.3.

Throughout the paper, we will abbreviate inessential constants involved in inequalities by writing $A\lesssim B$ for positive quantities $A$ and $B$ if the ratio $A/B$ has a positive upper bound. Also, $A\simeq B$ means both $A\lesssim B$ and $B\lesssim A$.

The paper is organized as follows. 
In Section 2, we collect some basic prerequisites.
Section 3 establishes the essential norm characterization (Theorem 3.1) for operators satisfying the integrability condition \eqref{eq:essenstial}.
Finally, Theorem \ref{thm:main} is proved in Section 4.

\section{Preliminaries}

\subsection{Elementary results}
For simplicity, we set
\[
\bfrho(z,w):=\frac{i}{2}(\overline{w}_n-z_n)-z^{\prime} \cdot \overline{w^{\prime}}.
\]
Note that $\bfrho(z)=\bfrho(z,z)$.
With this notation, the Bergman kernel of $\calU$ is given by 
\[
K(z,w) = \frac {n!}{4\pi^n} \frac {1}{\bfrho(z,w)^{n+1}}, \quad z,w\in \calU.
\]

\begin{lemma}[{\cite[Lemma 13]{Liu18}}]\label{lem:FRestimate}
Let $t>-1$ and $s>n+1+t$. The identity
\begin{equation*}\label{eqn:keylem}
\int_{\calU} \frac{\bfrho(w)^{t}} {|\bfrho(z,w)|^{s}} \,dV(w) =
\frac {4 \pi^{n} \Gamma(1+t) \Gamma(s-t-n-1)} {\Gamma^2\left(s/2\right)} \bfrho(z)^{n+1+t-s}
\end{equation*}
holds for all $z\in \calU$.
\end{lemma}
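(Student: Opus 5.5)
We prove the stated identity, i.e. Lemma \ref{lem:FRestimate}, which is cited from \cite[Lemma 13]{Liu18}; here is how we would verify it directly. The plan is to use the Heisenberg group translations and the nonisotropic dilations of $\calU$ to reduce to the single point $z=\bfi=(0',i)$, and then to compute the resulting integral explicitly in coordinates adapted to $\bfrho$.

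\emph{Reduction to $z=\bfi$.} For $z\in\calU$ consider the affine map
\[
h_z(w)=\bigl(w'+z',\; w_n+\RePt z_n+i|z'|^2+2i\,w'\cdot\overline{z'}\bigr),
\]
composed with the dilation $\delta_t(w)=(t w',t^2 w_n)$. A direct check shows that $\bfrho(h_z(u),h_z(v))=\bfrho(u,v)$ and that $h_z$ has unit Jacobian, so it preserves $dV$. For the dilation we have $\bfrho(\delta_t u,\delta_t v)=t^2\bfrho(u,v)$, and the real Jacobian of $\delta_t$ is $t^{2(n-1)}t^{4}=t^{2n+2}$. Choosing $t=\bfrho(z)^{1/2}$ and composing, we obtain a volume-scaling automorphism sending $\bfi$ to $z$. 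After the change of variables $w=\psi(u)$, the left side becomes $\bfrho(z)^{t-s+n+1}$ times the same integral at $z=\bfi$, since the factors are $t^{2t}$ from $\bfrho(w)^t$, $t^{-2s}$ from $|\bfrho(z,w)|^{-s}$ and $t^{2n+2}$ from $dV$. This produces the power $\bfrho(z)^{n+1+t-s}$.

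\emph{Computation at $\bfi$.} Here $\bfrho(\bfi,w)=\tfrac{i}{2}(\overline{w}_n+i)$. Write $w_n=x+i(y+|w'|^2)$ with $y>0$, so that $\bfrho(w)=y$ and $dV=dV(w')\,dx\,dy$. Then
\[
|\bfrho(\bfi,w)|=\tfrac12\bigl|x-i(1+y+|w'|^2)\bigr|=\tfrac12\bigl(x^2+(1+y+|w'|^2)^2\bigr)^{1/2}.
\]
We integrate in three steps:
\begin{enumerate}
\item In $x$, using $\int_{\mathbb R}(x^2+a^2)^{-s/2}\,dx=\sqrt\pi\,\Gamma(\tfrac{s-1}{2})\Gamma(\tfrac s2)^{-1}a^{1-s}$.
\item In $w'\in\bbC^{n-1}$ via polar coordinates, using $\int_{\bbC^{n-1}}(b+|w'|^2)^{1-s}\,dV(w')=\pi^{n-1}\frac{\Gamma(s-n)}{\Gamma(s-1)}\,b^{n-s}$.
\item In $y$, a Beta integral $\int_0^\infty y^t(1+y)^{n-s}\,dy=\frac{\Gamma(1+t)\Gamma(s-n-1-t)}{\Gamma(s-n)}$.
\end{enumerate}
These require respectively $s>1$, $s>n$, and $t>-1$ with $s>n+1+t$, all implied by the hypotheses. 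Collecting the constants, including the factor $2^{s}$, and applying the duplication formula $\Gamma(\tfrac{s-1}{2})\Gamma(\tfrac s2)=2^{2-s}\sqrt\pi\,\Gamma(s-1)$, the $\Gamma(s-1)$ and $\Gamma(s-n)$ factors cancel. What remains is $4\pi^n\Gamma(1+t)\Gamma(s-t-n-1)/\Gamma(s/2)^2$.

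\emph{Main obstacle.} The only delicate point is the constant bookkeeping, meaning the powers of $2$ and the duplication formula. Convergence and the reduction step are routine.
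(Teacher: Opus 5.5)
Your proof is correct. The paper gives no proof of this identity; it quotes it from \cite[Lemma 13]{Liu18}. So there is no in-text argument to compare against, and your computation stands as a self-contained verification.

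Your reduction step is the paper's own M\"obius machinery. The composite $\psi=h_z\circ\delta_{\bfrho(z)^{1/2}}$ (dilate first, then translate; this is the order that gives $\psi(\bfi)=z$) is exactly $\sigma_z^{-1}$ from Section 2.2, because your $h_z$ is the inverse of the paper's. You could therefore cite \eqref{eq:rhosigma_z1} together with $J_R\sigma_z=\bfrho(z)^{-(n+1)}$ directly.

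At $\bfi$, the three one-variable integrals, their convergence conditions and the constant all check out. Under the duplication formula, $2^s\sqrt{\pi}\,\Gamma(\tfrac{s-1}{2})\Gamma(\tfrac{s}{2})^{-1}\cdot\pi^{n-1}\Gamma(s-n)\Gamma(s-1)^{-1}\cdot\Gamma(1+t)\Gamma(s-n-1-t)\Gamma(s-n)^{-1}$ collapses to $4\pi^n\Gamma(1+t)\Gamma(s-t-n-1)/\Gamma(s/2)^2$. Tonelli's theorem justifies the iterated integration.

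A shorter alternative after the same reduction uses the Cayley-transform formulas recorded in Section 4. Since $\bfi=\Phi(0)$, the integral at $\bfi$ becomes $4\int_{\ball}(1-|\eta|^2)^t\,|1+\eta_n|^{s-2n-2-2t}\,dV(\eta)$. This is a Forelli--Rudin integral at the boundary point $(0',-1)$, and its closed form (Gauss's value of ${}_2F_1$ at $1$) gives the same constant at once. That route relies on the ball formula, whereas yours needs only Beta integrals and the duplication formula.

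Three cosmetic points:
\begin{itemize}
\item $\bfrho(\bfi,w)=\tfrac{i}{2}(\overline{w}_n-i)$, not $+i$. Your next line, $\tfrac12|x-i(1+y+|w'|^2)|$, is the correct modulus.
\item You use the letter $t$ both for the exponent and for the dilation parameter.
\item For $n=1$ your second step is vacuous, though the formula you use still holds trivially.
\end{itemize}
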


The following Lemmas \ref{lem:elemtryeq1}-\ref{lem:k_zweak} are taken from \cite{LS20}.

\begin{lemma}\label{lem:elemtryeq1}
We have
\begin{equation*}\label{eqn:elemtryeq1}
 2|\bfrho(z,w)|\geq \max\{\bfrho(z),\bfrho(w)\}
\end{equation*}
for any $z, w\in \calU$.
\end{lemma}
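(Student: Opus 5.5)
The plan is to bound $|\bfrho(z,w)|$ from below by its real part and to compute that real part explicitly. The real part turns out to dominate the average of $\bfrho(z)$ and $\bfrho(w)$, which is stronger than what is claimed.

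First, write $z_n=c+id$ and $w_n=a+ib$, so that $d=\ImPt z_n$ and $b=\ImPt w_n$. Then
\[
\tfrac{i}{2}\overline{w}_n=\tfrac{i}{2}(a-ib)=\tfrac{b+ia}{2},\qquad -\tfrac{i}{2}z_n=-\tfrac{i}{2}(c+id)=\tfrac{d-ic}{2}.
\]
Consequently
\[
\RePt \bfrho(z,w)=\tfrac12\left(\ImPt z_n+\ImPt w_n\right)-\RePt\left(z'\cdot\overline{w'}\right).
\]

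Next, control the cross term by Cauchy--Schwarz followed by the arithmetic--geometric mean inequality:
\[
\RePt\left(z'\cdot\overline{w'}\right)\le |z'|\,|w'|\le \tfrac12\left(|z'|^2+|w'|^2\right).
\]
Substituting this into the expression for the real part gives
\[
\RePt\bfrho(z,w)\ \ge\ \tfrac12\left(\ImPt z_n-|z'|^2\right)+\tfrac12\left(\ImPt w_n-|w'|^2\right)=\tfrac12\left(\bfrho(z)+\bfrho(w)\right).
\]

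Finally, since $z,w\in\calU$, both $\bfrho(z)$ and $\bfrho(w)$ are positive. Therefore
\[
2|\bfrho(z,w)|\ge 2\RePt\bfrho(z,w)\ge \bfrho(z)+\bfrho(w)\ge \max\{\bfrho(z),\bfrho(w)\}.
\]
The argument involves no real obstacle. The only step needing care is the sign bookkeeping when extracting the real part of $\frac{i}{2}(\overline{w}_n-z_n)$.
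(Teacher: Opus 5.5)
Your proof is correct and complete. The paper gives no proof of this lemma (it is quoted from \cite{LS20}), and yours is the standard argument: your Cauchy--Schwarz/AM--GM step is equivalent to $|z'-w'|^2\ge 0$, so it amounts to the identity $2\RePt\bfrho(z,w)=\bfrho(z)+\bfrho(w)+|z'-w'|^2$, which gives the stronger bound $2|\bfrho(z,w)|\ge\bfrho(z)+\bfrho(w)$.
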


\begin{lemma}\label{blem}
Given $r > 0$, the inequalities
\begin{equation*}\label{eqn:eqvltquan}
\frac {1-\tanh (r)}{1+ \tanh (r)} \leq \frac{|\bfrho(z,u)|}{|\bfrho(z,v)|}
\leq \frac {1+\tanh (r)}{1-\tanh (r)}
\end{equation*}
hold for all $z,u,v\in \calU$ with $\beta(u,v)\leq r$, where $\beta(u,v)$ is the Bergman metric on $\calU$ given by 
\begin{equation*}\label{eqn:hyperdist}
 \beta(u,v)=\tanh^{-1}\sqrt{1-\frac{\bfrho(u)\bfrho(v)}{|\bfrho(u,v)|^2}}.
\end{equation*}
\end{lemma}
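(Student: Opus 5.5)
We would prove the lemma by moving $v$ to the base point $\bfi=(0',i)$ with an automorphism of $\calU$, and then transferring the resulting inequality to the unit ball, where it reduces to a triangle inequality.

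\emph{Step 1: transformation rule for $\bfrho$.} For every automorphism $\varphi$ of $\calU$ there is a nonvanishing holomorphic $h_\varphi$ with
\[
\bfrho(\varphi(z),\varphi(w))=\frac{\bfrho(z,w)}{h_\varphi(z)\overline{h_\varphi(w)}},\qquad z,w\in\calU .
\]
This follows from the transformation law of the Bergman kernel, $K(z,w)=J_C\varphi(z)K(\varphi z,\varphi w)\overline{J_C\varphi(w)}$, by taking $(n+1)$-th roots. Since $\bfrho(z,w)$ is holomorphic in $z$ and antiholomorphic in $w$, the root can be chosen consistently on the simply connected domain $\calU$. It suffices anyway to check the rule directly on the generators of the group: Heisenberg translations, where $h\equiv1$; dilations, where $h$ is constant; unitary maps in $z'$; and the inversion. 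The rule also makes $\beta$ invariant, since $\bfrho(z)=\bfrho(z,z)$.

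\emph{Step 2: reduction to $v=\bfi$.} Choose $\varphi$ with $\varphi(v)=\bfi$, and put $x=\varphi(z)$, $a=\varphi(u)$, so that $\beta(a,\bfi)=\beta(u,v)\le r$. Applying Step 1 three times gives:
\begin{itemize}
\item the factor $h(z)$ cancels in the quotient $\bfrho(z,u)/\bfrho(z,v)$;
\item $\bfrho(a,\bfi)=\bfrho(u,v)/(h(u)\overline{h(v)})$;
\item $1=\bfrho(\bfi)=\bfrho(v)/|h(v)|^2$.
\end{itemize}
Eliminating $h(u)$ and $h(v)$ from these relations should give, up to modulus,
\[
\frac{|\bfrho(z,u)|}{|\bfrho(z,v)|}=\frac{|\bfrho(x,a)|}{|\bfrho(x,\bfi)|\,|\bfrho(a,\bfi)|}.
\]
So it suffices to bound the right-hand side above and below for all $x\in\calU$ and all $a$ with $\beta(a,\bfi)\le r$.

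\emph{Step 3: transfer to the ball.} Use the Cayley transform $\Phi:\ball\to\calU$ with $\Phi(0)=\bfi$. A direct computation gives
\[
\bfrho(\Phi\xi,\Phi\eta)=c\,\frac{1-\langle\xi,\eta\rangle}{(1+\xi_n)(1+\overline{\eta_n})}
\]
for a fixed constant $c$. Taking $\eta=0$ normalizes $c$, and the expression above becomes exactly $|1-\langle\xi,\eta\rangle|$, with $\xi=\Phi^{-1}x$ and $\eta=\Phi^{-1}a$. Moreover $|\eta|=\tanh\beta(a,\bfi)\le\tanh r$. This can be read off directly from the formula for $\beta$ with $v=\bfi$: it gives $\tanh^2\beta(a,\bfi)=1-\bfrho(a)/|\bfrho(a,\bfi)|^2=|\eta|^2$ after the same substitution.

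\emph{Step 4: conclusion.} Since $|\xi|<1$,
\[
1-\tanh r\le|1-\langle\xi,\eta\rangle|\le1+\tanh r .
\]
Because $1-t\ge\frac{1-t}{1+t}$ and $1+t\le\frac{1+t}{1-t}$ for $0\le t<1$, this yields the stated (weaker) inequalities with $t=\tanh r$.

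The main obstacle is the bookkeeping of the factors $h$ and of the Cayley constants in Steps 2 and 3. These factors have to combine into exactly $|1-\langle\xi,\eta\rangle|$, with no leftover dependence on $x$ or $a$. We would verify this first by checking the identity of Step 3 explicitly, and then obtain Step 2 by composing with a ball automorphism $\varphi_\zeta$ conjugated by $\Phi$.
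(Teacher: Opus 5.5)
Your Step 2 identity is wrong, and no choice of $\varphi$ or more careful bookkeeping will fix it. From the rule in Step 1 the factor $h(z)$ does cancel, but what remains is
\[
\frac{|\bfrho(z,u)|}{|\bfrho(z,v)|}=\frac{|\bfrho(x,a)|}{|\bfrho(x,\bfi)|}\cdot\frac{|h(u)|}{|h(v)|}
=\frac{|\bfrho(x,a)|}{|\bfrho(x,\bfi)|\,|\bfrho(a,\bfi)|}\cdot\frac{|\bfrho(u,v)|}{\bfrho(v)}.
\]
The second equality uses $|h(u)|\,|h(v)|=|\bfrho(u,v)|/|\bfrho(a,\bfi)|$ and $|h(v)|^2=\bfrho(v)$. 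You have dropped the factor $|\bfrho(u,v)|/\bfrho(v)$, which does not depend on $\varphi$ and is in general not $1$. As a result, the bound $1-\tanh r\le|\bfrho(z,u)|/|\bfrho(z,v)|\le 1+\tanh r$ you reach in Step 4 is false. Take $v=\bfi$, $u=(0',i\lambda)$ with $\lambda>1$, and $z=(0',i\epsilon)$. Then $\bfrho(u,v)=(1+\lambda)/2$, so $t:=\tanh\beta(u,v)=(\lambda-1)/(\lambda+1)$, while
\[
\frac{|\bfrho(z,u)|}{|\bfrho(z,v)|}=\frac{\lambda+\epsilon}{1+\epsilon}\longrightarrow\lambda=\frac{1+t}{1-t}\qquad(\epsilon\to 0).
\]
This limit is strictly larger than $1+t$. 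So the constants in the lemma are sharp, and your argument derives the statement from a false inequality. Your observation that the lemma came out as a ``weaker'' consequence should have been the warning sign.

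The repair keeps the rest of your plan. Take $\varphi=\sigma_v=\delta_{\bfrho(v)^{-1/2}}\circ h_v$. Since $\bfrho(\sigma_v(z),\sigma_v(w))=\bfrho(v)^{-1}\bfrho(z,w)$, the factor $h$ is the constant $\bfrho(v)^{1/2}$. Hence the ratio equals exactly $|\bfrho(x,a)|/|\bfrho(x,\bfi)|$, and $\beta(a,\bfi)=\beta(u,v)$ follows directly from the formula for $\beta$; you do not need Step 1 in full generality. With $x=\Phi(\xi)$, $a=\Phi(\eta)$ and the Cayley formula $\bfrho(\Phi(\xi),\Phi(\eta))=(1-\xi\cdot\overline{\eta})/((1+\xi_n)(1+\overline{\eta}_n))$, this becomes
\[
\frac{|\bfrho(x,a)|}{|\bfrho(x,\bfi)|}=\frac{|1-\xi\cdot\overline{\eta}|}{|1+\eta_n|}.
\]
That is your quantity $|1-\xi\cdot\overline{\eta}|$ divided by the lost factor $|1+\eta_n|=1/|\bfrho(a,\bfi)|$. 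Your computation $|\eta|=\tanh\beta(a,\bfi)\le\tanh r$ is correct. So the numerator and the denominator both lie in $[1-\tanh r,1+\tanh r]$, which gives exactly the two-sided bound of the lemma. The paper itself gives no proof here; it quotes the lemma from its reference LS20.
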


For $w\in\calU$ and $r>0$, we let
\[
D(w,r)=\{u\in\calU:\beta(u,w)<r\}
\]
denote the Bergman ball centered at $w$ of radius $r$.

\begin{lemma}\label{lem:volBergmanball}
For any $z\in\mathcal{U}$ and $r>0$, we have
\begin{equation*}\label{eqn:volBergmanball}
V(D(z,r)) = \frac {4\pi^n}{n!} \frac {\tanh^{2n} r } {(1-\tanh^2 r)^{n+1}}\, \bfrho(z)^{n+1} .
\end{equation*}
\end{lemma}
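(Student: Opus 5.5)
The plan is to reduce the computation to the point $\bfi=(0',i)$ by the automorphism group of $\calU$, and then to evaluate the volume explicitly in coordinates adapted to the Cayley transform.

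\textbf{Reduction to $\bfi$.} The Heisenberg translations $h_a(z)=(z'+a',\,z_n+\RePt a_n+2i z'\cdot\overline{a'}+i|a'|^2)$ and the dilations $\delta_t(z)=(tz',t^2z_n)$ are automorphisms of $\calU$. A direct check gives
\[
\bfrho(h_a z,h_a w)=\bfrho(z,w),\qquad \bfrho(\delta_t z,\delta_t w)=t^2\bfrho(z,w).
\]
Hence both preserve $\beta$, by the formula for $\beta$ in Lemma \ref{blem}. It follows that $h_a(D(z,r))=D(h_a z,r)$ and $\delta_t(D(z,r))=D(\delta_t z,r)$. The real Jacobian of $h_a$ is $1$, since it is a unipotent affine map. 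The real Jacobian of $\delta_t$ is $t^{2(n-1)}\cdot t^4=t^{2n+2}$, and $\bfrho(\delta_t z)=t^2\bfrho(z)$.

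For a given $z\in\calU$, choose the translation $h_a$ carrying $(0',i\bfrho(z))$ to $z$. Then choose $t=\bfrho(z)^{1/2}$, so that $\delta_t\bfi=(0',i\bfrho(z))$. This yields
\[
V(D(z,r))=\bfrho(z)^{n+1}V(D(\bfi,r)).
\]
The problem is thus reduced to computing $V(D(\bfi,r))$.

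\textbf{Computation at $\bfi$.} Here $\bfrho(\bfi)=1$ and $\bfrho(u,\bfi)=\tfrac{i}{2}(-i-u_n)=\tfrac12(1-iu_n)$. With $s=\tanh r$, the condition $\beta(u,\bfi)<r$ becomes
\[
\bfrho(u)>(1-s^2)\,\tfrac14|1-iu_n|^2=(1-s^2)\,\tfrac14|1+\,\cdots|^2 .
\]
I would then apply the Cayley transform $\Phi^{-1}\colon\calU\to\ball$, given by $\zeta'=\frac{2u'}{1-iu_n}$, $\zeta_n=\frac{1+iu_n}{1-iu_n}$ up to normalization. Under this map $\bfi\mapsto0$, and one has the identity
\[
1-|\zeta|^2=\frac{4\bfrho(u)}{|1-iu_n|^2}.
\]
Consequently $D(\bfi,r)$ is mapped exactly onto the Euclidean ball $\{|\zeta|<s\}$.

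The volume is then
\[
V(D(\bfi,r))=\int_{|\zeta|<s}|J_C\Phi(\zeta)|^2\,dV(\zeta),
\]
where $J_C\Phi$ is a constant multiple of $(1+\zeta_n)^{-(n+1)}$. This integral is evaluated by expanding $(1+\zeta_n)^{-(n+1)}$ as a power series and using orthogonality of the monomials on the ball. Alternatively, I would integrate $|1+\zeta_n|^{-2(n+1)}$ over spheres of radius $\rho<s$ using the standard formula
\[
\int_{S}|1-\langle\zeta,\xi\rangle|^{-2(n+1)}\,d\sigma=\frac{1+|\xi|^2\cdots}{(1-\rho^2)^{n+1}}\text{-type}
\]
(Forelli--Rudin), and then integrate in $\rho$. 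The target closed form is
\[
\frac{4\pi^n}{n!}\,\frac{s^{2n}}{(1-s^2)^{n+1}} .
\]

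\textbf{Main obstacle.} The delicate step is this last radial integration, which must produce the exact constant. A cleaner route to the same answer: the ball Bergman kernel gives $|J_C\Phi(\zeta)|^2=c\,K_{\ball}(\zeta,\zeta_0)$-type expressions. Then
\[
\int_{|\zeta|<s}\frac{dV}{|1+\zeta_n|^{2n+2}}=\int_{|\eta|<1}\frac{s^{2n}\,dV(\eta)}{|1+s\eta_n|^{2n+2}}=s^{2n}\,\frac{\pi^n}{n!}\,\frac{1}{(1-s^2)^{n+1}},
\]
using the reproducing identity $\int_{\ball}|K_{\ball}(\eta,w)|^2dV=K_{\ball}(w,w)$ with $w=(0',-s)$. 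Matching the normalization of $J_C\Phi$ gives the factor $4$. As a sanity check on the constants, one can compare with Lemma \ref{lem:FRestimate} as $r\to\infty$.
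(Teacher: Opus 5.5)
The paper gives no proof of this lemma: it is one of the facts quoted from \cite{LS20}. So your argument has to stand on its own, and it does.

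The route you call ``cleaner'' is a complete and correct proof.
\begin{itemize}
\item The reduction $V(D(z,r))=\bfrho(z)^{n+1}V(D(\bfi,r))$ is right. In the paper's notation it is immediate from $D(z,r)=\sigma_z^{-1}(D(\bfi,r))$ and $J_R\sigma_z=\bfrho(z)^{-(n+1)}$ (Subsection~2.2).
\item Your formula for the inverse Cayley map is exactly $\Phi^{-1}$; no renormalization is needed.
\item The identity $1-|\zeta|^2=4\bfrho(u)/|1-iu_n|^2$ is correct, so $\Phi^{-1}(D(\bfi,r))=\{|\zeta|<\tanh r\}$. Equivalently, put $\eta=0$ in the formula for $\bfrho(\Phi(\xi),\Phi(\eta))$ in Section~4.
\item We have $J_C\Phi(\zeta)=-2i(1+\zeta_n)^{-(n+1)}$. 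The rescaling $\zeta=s\eta$ moves the pole from $\zeta_n=-1$ to $\eta_n=-1/s$, outside the closed ball. The integrand becomes a multiple of $|K_{\ball}(\cdot,w)|^2$ with $w=(0',-s)$ inside $\ball$. The identity $\int_{\ball}|1-\eta\cdot\overline{w}|^{-2(n+1)}\,dV(\eta)=\frac{\pi^n}{n!}(1-|w|^2)^{-(n+1)}$ then gives exactly $\frac{4\pi^n}{n!}\,s^{2n}(1-s^2)^{-(n+1)}$.
\end{itemize}

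When you write it up, tidy the following.
\begin{itemize}
\item Say explicitly that $K_{\ball}(\eta,w)=\frac{n!}{\pi^n}(1-\eta\cdot\overline{w})^{-(n+1)}$ is the kernel for the unnormalized measure $dV$; this is where $\pi^n/n!$ comes from.
\item Compute $J_C\Phi$ from its block-triangular Jacobian matrix instead of ``matching the normalization''.
\item Delete the unfinished alternatives: the power-series idea, the placeholder Forelli--Rudin formula, and the ``$|1+\cdots|^2$'' and ``$c\,K_{\ball}(\zeta,\zeta_0)$-type'' fragments. They add nothing.
\item The sanity check ``as $r\to\infty$'' is not meaningful as stated, since $V(D(\bfi,r))\to\infty$. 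A genuine check is available from Lemma~\ref{lem:FRestimate} with $t=0$ and exponent $2(n+1)$, which gives $\int_{\calU}|\bfrho(u,\bfi)|^{-2(n+1)}\,dV(u)=\frac{4\pi^n}{n!}$. Pulling back by $\Phi$, using $\bfrho(\Phi(\xi),\bfi)=(1+\xi_n)^{-1}$, turns this into $4V(\ball)$, which confirms the factor $4$ in $J_R\Phi$.
\end{itemize}
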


\begin{lemma}\label{lem:weak,convergence}
Let $1<p<\infty$ and $\{f_j\}$ be a sequence in $A^p$. Then $f_j\to 0$ weakly in $A^p$ if and only if  $\{f_j\}$ is bounded in $A^p$ and converges to $0$ uniformly on every compact subset of $\calU$.
\end{lemma}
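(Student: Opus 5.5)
The statement to prove is Lemma \ref{lem:weak,convergence}: $f_j\to0$ weakly in $A^p$ if and only if $\{f_j\}$ is bounded in $A^p$ and $f_j\to0$ uniformly on compact subsets of $\calU$. The plan uses two ingredients. One is the identification of $(A^p)^*$ with $A^{p'}$ through the Bergman projection, which is bounded on $L^p$ by \cite{CR80}. The other is the pointwise estimate for $A^p$ functions obtained from the mean value property on Bergman balls.

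\emph{Pointwise estimate.} Fix $r>0$. By subharmonicity of $|f|^p$ on a Euclidean ball contained in $D(z,r)$, together with Lemma \ref{lem:volBergmanball} to compare volumes, we get
\[
|f(z)|^p\lesssim \bfrho(z)^{-(n+1)}\int_{D(z,r)}|f|^p\,dV .
\]
This gives $|f(z)|\lesssim \bfrho(z)^{-(n+1)/p}\|f\|_p$. Since $\bfrho$ is bounded below on each compact set $E\subset\calU$, point evaluations are bounded on $A^p$ uniformly for $z\in E$.

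\emph{Necessity.} Weakly convergent sequences are bounded by the uniform boundedness principle. Point evaluations are continuous functionals, so $f_j(z)\to0$ for every $z$. Now fix a compact set $E$. Its closed $\delta$-neighborhood $E_\delta$ is still compact in $\calU$. Applying the pointwise estimate on $E_\delta$ and then Cauchy's estimates on polydiscs of radius $\delta$, the family $\{f_j\}$ is uniformly bounded and equicontinuous on $E$. By Arzelà--Ascoli, pointwise convergence to $0$ upgrades to uniform convergence on $E$. Alternatively, apply Montel's theorem: every subsequence has a further subsequence converging locally uniformly, and its limit must be $0$.

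\emph{Sufficiency.} Let $\sup_j\|f_j\|_p\le M$ and let $\Lambda\in(A^p)^*$. By Hahn--Banach, extend $\Lambda$ to $L^p$ and represent it by some $g\in L^{p'}$, so that $\Lambda(f)=\int_{\calU} f\overline{g}\,dV$. Given $\varepsilon>0$, choose a compact $E\subset\calU$ with $\|g\chi_{\calU\setminus E}\|_{p'}<\varepsilon$. This is possible because $g\in L^{p'}$ with $p'<\infty$, and $\calU$ is exhausted by the compact sets $\{\bfrho\ge1/k,\ |z|\le k\}$. Then
\[
|\Lambda(f_j)|\le \sup_E|f_j|\,\|g\|_{L^1(E)}+M\varepsilon .
\]
Here $\|g\|_{L^1(E)}$ is finite by Hölder, since $E$ has finite volume. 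The first term tends to $0$ by uniform convergence on $E$, so $\limsup_j|\Lambda(f_j)|\le M\varepsilon$ for every $\varepsilon>0$. Hence $f_j\to0$ weakly. This direction uses only Hahn--Banach and Riesz representation; the boundedness of $P$ is not needed.

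\emph{Main obstacle.} The delicate step is the subharmonicity estimate: we need a Euclidean ball of radius comparable to $\bfrho(z)$ inside $D(z,r)$. This follows from Lemma \ref{blem} and the explicit form of $\beta$ near $z$. However, local uniformity on compact sets needs only some fixed small Euclidean ball, which exists for $z$ in a compact set. So the crude local estimate suffices, and the step is in fact routine.
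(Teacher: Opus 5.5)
The paper does not prove this lemma; it quotes it from \cite{LS20}, so there is no in-paper argument to compare yours against. Your proof is correct and self-contained. For weak $\Rightarrow$ bounded and locally uniform, you use the uniform boundedness principle, continuity of point evaluations, and then Arzel\`a--Ascoli or Montel. For the converse, you extend $\Lambda$ by Hahn--Banach, represent it by some $g\in L^{p'}$, and split $\calU$ into a compact set $E$, where $f_j\to0$ uniformly, and a tail, where $\|g\chi_{\calU\setminus E}\|_{p'}<\varepsilon$. Both directions go through.

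The common route instead represents $\Lambda$ by some $g\in A^{p'}$, which depends on the boundedness of $P$ from \cite{CR80}. Your Hahn--Banach/Riesz step removes that dependence. As a result, the argument works verbatim for any closed subspace of $L^p(\calU)$ consisting of holomorphic functions, and its only analytic input is local boundedness of point evaluations. For the same reason, the first ingredient you list, the identification $(A^p)^*\cong A^{p'}$ via $P$, is never actually used.

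One auxiliary claim is justified incorrectly, although the lemma does not need it. You cannot obtain $|f(z)|^p\lesssim\bfrho(z)^{-(n+1)}\int_{D(z,r)}|f|^p\,dV$ from subharmonicity on a Euclidean ball inside $D(z,r)$ plus a volume comparison. Already for $z=(0',it)$, the ball $D(z,r)$ is $D(\bfi,r)$ dilated by $t^{1/2}$ in the $z'$ directions and by $t$ in the $z_n$ direction. So for $n\geq2$, any Euclidean ball inside it has volume $\lesssim\min(t,t^{1/2})^{2n}$, which is $o(t^{n+1})$ both as $t\to0$ and as $t\to\infty$. For small $\bfrho(z)$, a ball of radius $\simeq\bfrho(z)$ gives only the weaker bound $\bfrho(z)^{-2n}$.

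The correct derivation applies the mean value inequality at $\bfi$ to the holomorphic function $f\circ\sigma_z^{-1}$ on a fixed ball $B\subset D(\bfi,r)$. Changing variables with $J_R\sigma_z=\bfrho(z)^{-(n+1)}$ gives
\[
|f(z)|^p\le\frac{1}{V(B)}\int_B|f\circ\sigma_z^{-1}|^p\,dV=\frac{\bfrho(z)^{-(n+1)}}{V(B)}\int_{\sigma_z^{-1}(B)}|f|^p\,dV .
\]
As you note at the end, the crude estimate on a fixed small Euclidean ball around points of a compact set is all the lemma requires, so your proof itself is unaffected.
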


\begin{lemma}\label{lem:k_zweak}
For $1<p<\infty$, $k_z^{(p)}\to 0$ weakly in $A^p$ as $z\to\partial\widehat{\calU}$.
\end{lemma}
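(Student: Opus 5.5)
The statement is Lemma~\ref{lem:k_zweak}: $k_z^{(p)}\to 0$ weakly in $A^p$ as $z\to\partial\widehat{\calU}$. The plan is to verify the two conditions of Lemma~\ref{lem:weak,convergence}. Boundedness is automatic, since $\|k_z^{(p)}\|_p=1$. It therefore remains to show that $k_z^{(p)}\to 0$ uniformly on compact subsets of $\calU$ as $z$ tends either to the Euclidean boundary $\{\bfrho=0\}$ or to $\infty$. Strictly speaking, Lemma~\ref{lem:weak,convergence} is stated for sequences, so I would apply it along an arbitrary sequence $z_j\to\partial\widehat{\calU}$.

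\textbf{Norm of the kernel.} First I would compute $\|K_z\|_p$ exactly from Lemma~\ref{lem:FRestimate} with $t=0$ and $s=p(n+1)$. The hypothesis $s>n+1$ holds because $p>1$. This gives
\[
\|K_z\|_p \simeq \bfrho(z)^{(n+1)/p-(n+1)}=\bfrho(z)^{-(n+1)/p'},
\]
and hence
\[
|k_z^{(p)}(w)| \simeq \frac{\bfrho(z)^{(n+1)/p'}}{|\bfrho(w,z)|^{n+1}}.
\]

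\textbf{Pointwise decay.} Fix a compact set $E\subset\calU$. Then $\bfrho(w)\ge \delta>0$ and $|w|\le R$ for $w\in E$. The goal is to show
\[
\sup_{w\in E}\frac{\bfrho(z)^{1/p'}}{|\bfrho(w,z)|}\to 0 .
\]
Lemma~\ref{lem:elemtryeq1} gives $|\bfrho(w,z)|\ge \tfrac12\bfrho(w)\ge \delta/2$, so the quotient is at most $2\bfrho(z)^{1/p'}/\delta$. This handles the case $\bfrho(z)\to 0$, where $z$ approaches a finite boundary point.

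\textbf{Escape to infinity.} This is the main obstacle, since $\bfrho(z)$ may stay large or even grow. Here I would use a lower bound for $|\bfrho(w,z)|$ in terms of $|z|$. Write
\[
\RePt\bfrho(w,z)=\tfrac12\bigl(\bfrho(w)+\bfrho(z)+|z'-w'|^2\bigr).
\]
This identity follows from expanding $\ImPt(z_n)$ and $\ImPt(w_n)$, and it gives
\[
|\bfrho(w,z)|\ge \tfrac12\bigl(\bfrho(z)+|z'|^2\bigr)-C_E(1+|z'|).
\]
I would also use the bound $|\bfrho(w,z)|\ge \tfrac12|z_n-\overline{w}_n|-|z'||w'|$, which follows directly from the definition of $\bfrho(w,z)$.

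Now split the escape into cases.
\begin{itemize}
\item If $|z'|\to\infty$ or $\bfrho(z)\to\infty$, the first bound gives $|\bfrho(w,z)|\gtrsim \bfrho(z)+|z'|^2$ for large $z$. Then the quotient is at most
\[
\frac{\bfrho(z)^{1/p'}}{\bfrho(z)+|z'|^2}\le \frac{1}{\bfrho(z)^{1/p}}\quad\text{or}\quad \frac{(\bfrho(z)+|z'|^2)^{1/p'}}{\bfrho(z)+|z'|^2},
\]
and both tend to $0$.
\item If $|z'|$ and $\bfrho(z)$ remain bounded but $|z_n|\to\infty$, then $|\bfrho(w,z)|\gtrsim |z_n|$ uniformly on $E$. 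Since $\bfrho(z)^{1/p'}$ stays bounded in this case, the quotient again tends to $0$.
\end{itemize}
To cover mixed regimes I would pass to subsequences and apply these cases. Combining the three cases gives uniform convergence to $0$ on $E$, and Lemma~\ref{lem:weak,convergence} then yields the weak convergence.
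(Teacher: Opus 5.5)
Your proof is correct. The paper gives no argument for this lemma; it is quoted from \cite{LS20}. Your reduction through Lemma \ref{lem:weak,convergence} to uniform decay of $|k_z^{(p)}(w)|=c_{n,p}\bigl[\bfrho(z)^{1/p^\prime}/|\bfrho(w,z)|\bigr]^{n+1}$ on compacta is the natural self-contained route from the listed preliminaries.

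The following steps all check out:
\begin{itemize}
\item the identity $2\RePt\bfrho(w,z)=\bfrho(w)+\bfrho(z)+|z^\prime-w^\prime|^2$;
\item the bound $|\bfrho(w,z)|\ge\frac12|z_n-\overline{w}_n|-|z^\prime||w^\prime|$;
\item the subsequence bookkeeping: if $\bfrho(z_j)$ stays away from $0$, leaving every compact set forces $|z_j|\to\infty$.
\end{itemize}

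There is one small slip in your first bullet. When $|z^\prime|\to\infty$ with $\bfrho(z)$ bounded, $\bfrho(z)^{-1/p}$ does not tend to $0$, so ``both tend to $0$'' is not literally true. The second bound $(\bfrho(z)+|z^\prime|^2)^{-1/p}$ does tend to $0$ in every sub-case of that bullet, so just use it alone.

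You can also avoid the case split entirely. For $w\in E$, Lemma \ref{lem:elemtryeq1} gives $|\bfrho(w,z)|\ge\max\{\delta,\bfrho(z)\}/2$, so
\[
\frac{\bfrho(z)^{1/p^\prime}}{|\bfrho(w,z)|}\lesssim\min\bigl\{\bfrho(z)^{1/p^\prime},\,|\bfrho(w,z)|^{-1/p}\bigr\}.
\]
Since $E$ is $\beta$-bounded, Lemma \ref{blem} gives, uniformly in $w\in E$,
\[
|\bfrho(w,z)|\simeq|\bfrho(z,\bfi)|=\tfrac12|z_n+i|\ge\tfrac12\sqrt{1+|z|^2}.
\]
Hence
\[
\sup_{w\in E}|k_z^{(p)}(w)|\lesssim\min\bigl\{\bfrho(z)^{1/p^\prime},(1+|z|^2)^{-1/(2p)}\bigr\}^{n+1},
\]
which tends to $0$ as $z\to\partial\widehat{\calU}$ in one step, with no subsequences needed.
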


We will use the classical Schur's test in its standard $L^p$ form.

\begin{lemma}
Suppose $(X,\mu)$ is a measure space and $R(x,y)$ is a nonnegative measurable function on $X\times X$.
For $1<p<\infty$, if there exist positive constants $C_1$ and $C_2$ and a positive measurable function $h$ on $X$ such that 
\[
\int_{X} R(x,y) h(y)^{p^\prime} \,d\mu(y) \leq C_1 h(x)^{p^\prime}
\]
for almost every $x$ and 
\[
\int_{X} R(x,y) h(x)^{p} \,d\mu(x) \leq C_2 h(y)^p
\]
for almost every $y$, then $Ef(x)=\int_{X} R(x,y) f(y)\, d\mu(y)$ defines a bounded operator on $L^p(X,\mu)$ with $\|E\|\leq C_1^{1/p^\prime} C_2^{1/p}$.
\end{lemma}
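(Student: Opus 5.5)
The proof is the classical Hölder-plus-Tonelli argument, with the test function $h$ inserted so that the two hypotheses can be used one after the other. Fix $f\in L^p(X,\mu)$. It suffices to treat $f\geq 0$, since $|Ef(x)|\leq \int_X R(x,y)|f(y)|\,d\mu(y)$ and $R\geq 0$. For a.e. $x$ I will write
\[
R(x,y)f(y)=\bigl(R(x,y)^{1/p'}h(y)\bigr)\cdot\bigl(R(x,y)^{1/p}h(y)^{-1}f(y)\bigr).
\]
Applying Hölder's inequality in $y$ with exponents $p'$ and $p$ gives
\[
\int_X R(x,y)f(y)\,d\mu(y)\leq \Bigl(\int_X R(x,y)h(y)^{p'}\,d\mu(y)\Bigr)^{1/p'}\Bigl(\int_X R(x,y)h(y)^{-p}f(y)^p\,d\mu(y)\Bigr)^{1/p}.
\]
By the first hypothesis, the first factor is at most $C_1^{1/p'}h(x)$.

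Next I raise this bound to the $p$-th power and integrate in $x$. Since everything is nonnegative and measurable on $X\times X$, Tonelli's theorem lets me exchange the order of integration:
\[
\int_X\Bigl(\int_X R(x,y)f(y)\,d\mu(y)\Bigr)^p d\mu(x)\leq C_1^{p/p'}\int_X f(y)^p h(y)^{-p}\Bigl(\int_X R(x,y)h(x)^p\,d\mu(x)\Bigr)d\mu(y).
\]
The second hypothesis bounds the inner integral by $C_2h(y)^p$. The factors of $h(y)$ then cancel, since $h>0$, and we get $\|Ef\|_p^p\leq C_1^{p/p'}C_2\|f\|_p^p$. Taking $p$-th roots yields $\|E\|\leq C_1^{1/p'}C_2^{1/p}$.

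Two side issues remain.

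\emph{Well-definedness.} The displayed bound for $f\geq 0$ shows that $\int_X R(x,y)|f(y)|\,d\mu(y)<\infty$ for a.e. $x$. Hence $Ef(x)$ is defined by an absolutely convergent integral almost everywhere, and it is measurable by Fubini--Tonelli.

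\emph{Null sets.} Each hypothesis holds only off a null set. This is harmless: the first is used pointwise in $x$ before integrating over $x$, and the second is used inside the $y$-integral, so the exceptional sets do not affect either integral.

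There is no real obstacle here. The only delicate point is choosing the weights $h(y)^{\pm1}$ in the Hölder splitting so that the two hypotheses apply in that order.
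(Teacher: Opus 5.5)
Your argument is the classical H\"older--Tonelli proof of Schur's test. The paper gives no proof of its own: it quotes the lemma as classical and only uses it on $\calU$ with Lebesgue measure, in the proof of Proposition~\ref{prop:localization}. For $\sigma$-finite $\mu$ your proof is complete, and the weighted H\"older splitting and the exponent bookkeeping are right.

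The one unjustified step is the interchange of integrals. Tonelli's theorem needs $\sigma$-finiteness, not just nonnegativity and product measurability. This matters: for an arbitrary measure space, as the lemma is literally stated, the conclusion is false.

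Here is a counterexample. Let $\mathbb{T}=\mathbb{R}/\mathbb{Z}$, and let $M\subset\mathbb{T}$ be a meager Borel set of full Lebesgue measure (the complement of a dense $G_\delta$ null set). Let $X=\mathbb{T}\times\{1,2\}$ with its Borel sets. Take $\mu$ to be Lebesgue measure on $\mathbb{T}\times\{2\}$. On $\mathbb{T}\times\{1\}$, set $\mu(B)=0$ if $B$ is meager and $\mu(B)=\infty$ otherwise; this is countably additive. Put $R((s,1),(t,2))=\chi_M(t-s)$, $R=0$ elsewhere, and $h\equiv 1$.
\begin{itemize}
\item The row integrals are $|s+M|=1$ or $0$, so the first hypothesis holds with $C_1=1$.
\item The column integrals are $\mu((t-M)\times\{1\})=0$ or $0$, so the second hypothesis holds with every $C_2>0$.
\item Yet $f=\chi_{\mathbb{T}\times\{2\}}$ has $\|f\|_p=1$, while $Ef=\chi_{\mathbb{T}\times\{1\}}$ has infinite $L^p$ norm, since $\mathbb{T}$ is not meager.
\end{itemize}
The two iterated integrals of $R$ are $\infty$ and $0$, which is exactly where your exchange step breaks.

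To fix this, add the standard hypothesis that $\mu$ is $\sigma$-finite; that is all the paper needs. Alternatively, if $\mu$ is only semi-finite, use the fact that $\{f\neq 0\}$ is $\sigma$-finite:
\begin{itemize}
\item This gives measurability of $Ef$, which also repairs your well-definedness paragraph; that step likewise appeals to Fubini--Tonelli.
\item For each $A$ with $\mu(A)<\infty$, Tonelli applies on $A\times\{f\neq0\}$ and yields $\int_A |Ef|^p\,d\mu\le C_1^{p/p'}C_2\|f\|_p^p$.
\item Semi-finiteness then lets you take the supremum over all such $A$.
\end{itemize}
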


\subsection{Möbius transforms}
For each $t>0$, we define the nonisotropic dilation $\delta_t$ by
\[
\delta_t(u)=(t u^{\prime},t^2 u_n), \quad u\in \calU.
\]
For each fixed $z\in\calU$, we associate the following (holomorphic) affine
self-mapping of $\calU$:
\[
h_z(u) := \left(u^{\prime} - z^{\prime}, u_n - \RePt z_n - 2i u^{\prime} \cdot \overline{z^{\prime}}  + i|z^{\prime}|^2 \right),
\quad u\in \calU.
\]
All these mappings are holomorphic automorphisms of $\calU$; see \cite[Chapter XII]{Ste93}. Hence the mappings 
\[
\sigma_z := \delta_{\bfrho(z)^{-1/2}} \circ h_z
\]
are holomorphic automorphisms of $\calU$. 
It is well known that the Bergman metric $\beta$ is invariant under the holomorphic automorphisms.
Therefore, 
\begin{equation}\label{eq:invarianceBeta}
\beta(\sigma_z(u),\sigma_z(v))=\beta(\sigma_z^{-1}(u),\sigma_z^{-1}(v))=\beta(u,v).
\end{equation}
Moreover, $\sigma_z(D(w,r))=D(\sigma_z(w),r)$.
Set $\bfi=(0^{\prime},i)$.
Simple calculations show that $\sigma_z(z)=\bfi$ and 
\begin{equation*}\label{eqn:jacobian}
(J_R \sigma_z) (u) = \bfrho(z)^{-(n+1)},
\end{equation*}
where $(J_R\sigma_z)(u)$ stands for the real Jacobian of $\sigma_z$ at $u$.
Also, directly from the definition, we have
\[
\bfrho(\delta_t(u),\delta_t(v))=t^2 \bfrho(u,v)
\]
and 
\[
\bfrho(h_z(u),h_z(v))=\bfrho(u,v).
\]
Combining the two equalities above yields
\begin{equation}\label{eq:rhosigma_z1}
\bfrho(\sigma_z(u),\sigma_z(v)) = \bfrho(z)^{-1} \bfrho(u,v).
\end{equation}

\subsection{Translation operators}
For each $z\in\calU$, define an adapted translation operator associated with $\sigma_z$ by
\[
U_z^p f = (f\circ \sigma_z) \cdot \bfrho(z)^{-(n+1)/p}.
\]
Note that the $p$ in $U_z^p$ is an index, not a power.
Clearly, each $U_z^p$ is invertible, and its inverse is given by
\[
(U_z^p)^{-1} f = (f\circ \sigma_z^{-1}) \cdot \bfrho(z)^{(n+1)/p}.
\]
By a change of variables, we can see that $\|U_z^p f\|_p=\|f\|_p$ for all $f\in L^p$ and 
$\langle U_z^p f, g\rangle  = \langle  f, (U_z^{p^\prime})^{-1} g\rangle$
for $f\in L^p$ and $g\in L^{p^{\prime}}$. Therefore, $U_z^p$ is an isometric isomorphism on $L^p$ and 
$(U_z^p)^*=(U_z^{p^\prime})^{-1}$.

\begin{lemma}\label{lem:U_z^pK_w}
We have
\[
U_z^p K_w= \bfrho(z)^{(n+1)/p^\prime} K_{\sigma_z^{-1}(w)}
\]
for all $z,w\in\calU$.
\end{lemma}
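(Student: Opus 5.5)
The plan is a direct computation: evaluate $K_w\circ\sigma_z$ explicitly, using the transformation rule \eqref{eq:rhosigma_z1} for $\bfrho$ under $\sigma_z$. By definition,
\[
U_z^p K_w(u) = K(\sigma_z(u),w)\,\bfrho(z)^{-(n+1)/p}.
\]
Write $w=\sigma_z(v)$ with $v=\sigma_z^{-1}(w)$; this is legitimate because $\sigma_z$ is an automorphism of $\calU$. Since $K(\zeta,\eta)=\frac{n!}{4\pi^n}\bfrho(\zeta,\eta)^{-n-1}$, equation \eqref{eq:rhosigma_z1} applied to the pair $(u,v)$ gives
\[
\bfrho(\sigma_z(u),w)=\bfrho(\sigma_z(u),\sigma_z(v))=\bfrho(z)^{-1}\bfrho(u,v).
\]

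It follows that
\[
K(\sigma_z(u),w)=\frac{n!}{4\pi^n}\,\bfrho(z)^{n+1}\,\bfrho(u,v)^{-n-1}=\bfrho(z)^{n+1}K(u,\sigma_z^{-1}(w)).
\]
Multiplying by $\bfrho(z)^{-(n+1)/p}$ and using $1-1/p=1/p'$ yields
\[
U_z^pK_w=\bfrho(z)^{(n+1)/p'}K_{\sigma_z^{-1}(w)},
\]
as claimed.

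The one point needing care is whether \eqref{eq:rhosigma_z1} holds as an exact identity. It involves no unimodular factor, so raising it to the integer power $-(n+1)$ introduces no branch issues. It does hold exactly: $h_z$ preserves $\bfrho(\cdot,\cdot)$, which can be checked by expanding the definition, and $\delta_t$ multiplies it by $t^2$. With $t=\bfrho(z)^{-1/2}$ this gives exactly the factor $\bfrho(z)^{-1}$. The rest is routine.
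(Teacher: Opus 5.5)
Your proposal is correct and follows the paper's proof essentially verbatim: you use \eqref{eq:rhosigma_z1} to get $K(\sigma_z(u),w)=\bfrho(z)^{n+1}K(u,\sigma_z^{-1}(w))$ and then multiply by $\bfrho(z)^{-(n+1)/p}$, using $1-1/p=1/p^\prime$. Your side remark is also right: \eqref{eq:rhosigma_z1} holds exactly, and the integer exponent $-(n+1)$ raises no branch issue.
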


\begin{proof}
Note from \eqref{eq:rhosigma_z1} that 
\[
K(\sigma_z(u),w) =\bfrho(z)^{n+1} K(u,\sigma_z^{-1}(w)).
\]
Then, a straightforward calculation gives
\begin{align*}
U_z^p K_w(u)&=\bfrho(z)^{-(n+1)/p}K(\sigma_z(u),w)\\
&=\bfrho(z)^{(n+1)/p^\prime} K(u,\sigma_z^{-1}(w))= \bfrho(z)^{(n+1)/p^\prime} K_{\sigma_z^{-1}(w)}(u),
\end{align*}
as desired.
\end{proof}

Note from Lemma \ref{lem:FRestimate} that there is a positive constant $C_p$ independent of $z$ such that 
$\|K_z \|_{p}  = C_p \bfrho(z)^{-(n+1)/p^{\prime}}$. This together with Lemma \ref{lem:U_z^pK_w} easily yields the following identity.

\begin{corollary}\label{cor:U_z^pk_w}
We have
\[
U_z^p k_w^{(p)}= k_{\sigma_z^{-1}(w)}^{(p)}
\]
for all $z,w\in\calU$.
\end{corollary}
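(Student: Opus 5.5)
The identity is a direct consequence of Lemma \ref{lem:U_z^pK_w} together with the norm formula $\|K_w\|_p = C_p\,\bfrho(w)^{-(n+1)/p'}$ from Lemma \ref{lem:FRestimate}. The only non-trivial input is how $\bfrho$ transforms under $\sigma_z^{-1}$.

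First, write $k_w^{(p)} = K_w/\|K_w\|_p = C_p^{-1}\bfrho(w)^{(n+1)/p'}K_w$. Since $U_z^p$ is linear, Lemma \ref{lem:U_z^pK_w} gives
\[
U_z^p k_w^{(p)} = C_p^{-1}\bfrho(w)^{(n+1)/p'}\,\bfrho(z)^{(n+1)/p'}\,K_{\sigma_z^{-1}(w)} .
\]
Second, set $v=\sigma_z^{-1}(w)$. Taking $u=v$ in \eqref{eq:rhosigma_z1} and using $\bfrho(v)=\bfrho(v,v)$ gives $\bfrho(\sigma_z(v))=\bfrho(z)^{-1}\bfrho(v)$, that is,
\[
\bfrho(w)=\bfrho(z)^{-1}\bfrho(\sigma_z^{-1}(w)).
\]
Substituting this into the prefactor, the powers of $\bfrho(z)$ cancel, and we get
\[
U_z^p k_w^{(p)} = C_p^{-1}\bfrho(\sigma_z^{-1}(w))^{(n+1)/p'}K_{\sigma_z^{-1}(w)} .
\]
Third, the right-hand side is exactly $K_v/\|K_v\|_p = k_v^{(p)}$, again by the norm formula $\|K_v\|_p = C_p\,\bfrho(v)^{-(n+1)/p'}$.

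As a consistency check, $U_z^p$ is an isometry, so $\|U_z^p k_w^{(p)}\|_p=1$. This also confirms the scalar without computing it: $U_z^p k_w^{(p)}$ is a positive multiple of $K_v$ with unit norm, hence equals $k_v^{(p)}$.

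There is no real obstacle here. The only point needing care is the direction of the Jacobian-type factor, namely whether $\bfrho(\sigma_z^{-1}(w))$ equals $\bfrho(z)\bfrho(w)$ or $\bfrho(w)/\bfrho(z)$. Applying \eqref{eq:rhosigma_z1} at the diagonal point $v$ as above settles it.
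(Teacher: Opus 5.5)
Your proof is correct and is exactly the paper's argument: combine Lemma~\ref{lem:U_z^pK_w} with the norm formula $\|K_w\|_p=C_p\bfrho(w)^{-(n+1)/p'}$. The identity $\bfrho(\sigma_z^{-1}(w))=\bfrho(z)\bfrho(w)$, obtained from \eqref{eq:rhosigma_z1} on the diagonal, absorbs the prefactor, so you have simply written out the details that the paper summarizes as ``easily yields.''
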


For a bounded linear operator $T$ on $L^p$ and  $z\in\calU$, define 
\[
T_z=(U_z^p)^{-1} T U_z^p.
\] 
It is obvious that $\|T_z\|=\|T\|$. Moreover, by Corollary \ref{cor:U_z^pk_w}, we have
\begin{equation}\label{cor:BerzinT}
\widetilde{T}(z)= \langle T_z k_\bfi^{(p)}, k_\bfi^{(p^\prime)}\rangle.
\end{equation}

\begin{lemma}\label{lem:T_varphi_z}
Given $u\in L^\infty$, $(T_{u})_z = T_{u\circ \sigma_z^{-1}}$ for any $z\in \calU$. Consequently, 
if $u_1,u_2,\dots,u_m\in L^{\infty}$, then 
\[
(T_{u_1}T_{u_2}\cdots T_{u_m})_z =T_{u_1\circ\sigma_z^{-1}}T_{u_2\circ\sigma_z^{-1}}\cdots T_{u_m\circ\sigma_z^{-1}}
\]
for any $z\in\calU$.
\end{lemma}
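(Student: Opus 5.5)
The identity $(T_u)_z = T_{u\circ\sigma_z^{-1}}$ should follow from two observations. First, conjugating the multiplication operator $M_u$ by $U_z^p$ gives multiplication by the transported symbol. Second, the Bergman projection $P$ commutes with each $U_z^p$. Granting these,
\[
(T_u)_z=(U_z^p)^{-1}PM_uU_z^p=P\,(U_z^p)^{-1}M_uU_z^p=PM_{u\circ\sigma_z^{-1}}=T_{u\circ\sigma_z^{-1}}.
\]

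\emph{Step 1: the multiplication operator.} For $f\in L^p$ we have $M_uU_z^pf=u\cdot(f\circ\sigma_z)\,\bfrho(z)^{-(n+1)/p}$. Applying $(U_z^p)^{-1}$ composes with $\sigma_z^{-1}$ and multiplies by $\bfrho(z)^{(n+1)/p}$, which gives $(u\circ\sigma_z^{-1})f$. Hence $(U_z^p)^{-1}M_uU_z^p=M_{u\circ\sigma_z^{-1}}$. Note also that $u\circ\sigma_z^{-1}\in L^\infty$, since $\sigma_z$ is a diffeomorphism with constant Jacobian.

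\emph{Step 2: $P$ commutes with $U_z^p$.} This is the main point. I would verify $PU_z^pf=U_z^pPf$ directly from the kernel. We have
\[
PU_z^pf(u)=\bfrho(z)^{-(n+1)/p}\int_{\calU}K(u,w)f(\sigma_z(w))\,dV(w).
\]
Substitute $w=\sigma_z^{-1}(v)$. Since $\sigma_z$ has real Jacobian $\bfrho(z)^{-(n+1)}$, we get $dV(w)=\bfrho(z)^{n+1}dV(v)$. By \eqref{eq:rhosigma_z1} applied to the pair $(\sigma_z(u),v)=(\sigma_z(u),\sigma_z(w))$,
\[
K(u,\sigma_z^{-1}(v))=\bfrho(z)^{-(n+1)}K(\sigma_z(u),v).
\]
This is the kernel relation already used in the proof of Lemma \ref{lem:U_z^pK_w}. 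The two powers of $\bfrho(z)$ cancel, so
\[
PU_z^pf(u)=\bfrho(z)^{-(n+1)/p}\int_{\calU} K(\sigma_z(u),v)f(v)\,dV(v)=\bfrho(z)^{-(n+1)/p}(Pf)(\sigma_z(u))=U_z^pPf(u).
\]
The only care needed is justifying the integrals for $f\in L^p$. I would do the computation for $f$ in a dense class such as $L^p\cap L^2$ or compactly supported functions, where the integral converges absolutely by Lemma \ref{lem:FRestimate} and H\"older's inequality. The identity then extends by boundedness of $P$ on $L^p$ and the isometry of $U_z^p$. Alternatively, one can use duality: $\langle PU_z^pf,g\rangle$ moved through $(U_z^p)^*=(U_z^{p'})^{-1}$.

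\emph{Step 3: products.} For the second assertion, expand
\[
(T_{u_1}\cdots T_{u_m})_z=(U_z^p)^{-1}T_{u_1}U_z^p\,(U_z^p)^{-1}T_{u_2}U_z^p\cdots(U_z^p)^{-1}T_{u_m}U_z^p
\]
by inserting $U_z^p(U_z^p)^{-1}$ between consecutive factors. This gives $(T_{u_1})_z\cdots(T_{u_m})_z$, and applying Steps 1 and 2 to each factor yields the stated formula.
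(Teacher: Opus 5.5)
Your proof is correct, but it takes a different route from the paper's.

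\textbf{The paper's route.} The paper never shows that $P$ commutes with $U_z^p$. It checks the identity weakly: for $f\in A^p$ and $g\in A^{p^\prime}$ it writes
$\langle T_{u\circ\sigma_z^{-1}}f,g\rangle=\langle (u\circ\sigma_z^{-1})f,g\rangle=\langle u\,U_z^pf,U_z^{p^\prime}g\rangle=\langle T_uU_z^pf,U_z^{p^\prime}g\rangle=\langle (T_u)_zf,g\rangle$.
This uses only three facts:
\begin{itemize}
\item the adjoint relation $(U_z^p)^*=(U_z^{p^\prime})^{-1}$;
\item the intertwining $U_z^p[(u\circ\sigma_z^{-1})f]=u\,U_z^pf$, which is your Step 1;
\item the fact that $U_z^{p^\prime}g$ is again holomorphic, so $P$ can be inserted or dropped inside the pairing.
\end{itemize}
This is essentially the duality alternative you mention at the end of Step 2.

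\textbf{Your route.} You instead prove the stronger operator identity $PU_z^p=U_z^pP$ on all of $L^p$. You get it from the kernel law $K(\sigma_z(u),v)=\bfrho(z)^{n+1}K(u,\sigma_z^{-1}(v))$ and the constant Jacobian $\bfrho(z)^{-(n+1)}$. This gives the conclusion for $T_u=PM_u$ viewed on $L^p$ as well as on $A^p$, and the commutation is a reusable fact. The cost is a kernel computation.

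\textbf{Trade-off.} The paper's version avoids the kernel entirely. To pass from equal pairings to equal operators, it only needs $A^{p^\prime}$ to separate points of $A^p$, for instance via $\langle F,K_w\rangle=F(w)$ with $K_w\in A^{p^\prime}$.

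\textbf{A simplification.} The density argument in Step 2 is unnecessary. By Lemma \ref{lem:FRestimate} with $t=0$ and $s=(n+1)p^\prime$, we have $K(u,\cdot)\in L^{p^\prime}$ for every $u\in\calU$. Hence the integral defining $PU_z^pf(u)$ converges absolutely for every $f\in L^p$ by H\"older, and the change of variables is justified directly.
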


\begin{proof}
For any $f\in A^p$ and $g\in A^{p^\prime}$, by the fact that $(U_z^p)^*=(U_z^{p^\prime})^{-1}$, we have
\begin{align*}
\langle T_{u\circ \sigma_z^{-1}} f, g \rangle
&~=~  \langle (u\circ \sigma_z^{-1}) f, g \rangle ~=~ \langle U_z^p [(u\circ \sigma_z^{-1}) f], U_z^{p^\prime} g \rangle\\
&~=~ \langle u U_z^p f, U_z^{p^\prime} g \rangle ~=~  \langle T_{u} U_z^p f, U_z^{p^\prime} g \rangle \\
&~=~  \langle (U_z^p)^{-1} T_{u} U_z^p f, g \rangle
=\langle (T_{u})_z f,g \rangle.
\end{align*}
Hence, $(T_{u})_z = T_{u\circ \sigma_z^{-1}}$. 
The product formula follows by inserting $U_z^p (U_z^p)^{-1}$ between consecutive factors. 
\end{proof}

\section{A characterization of the essential norm}

The purpose of this section is to prove an essential norm characterization, which plays a key role in the proof of Theorem \ref{thm:main}. 
Recall that the essential norm of a bounded linear operator $T:X\to Y$ is defined by 
\[
\|T\|_{e,X\to Y} = \inf \left\{\|T-C\|_{X\to Y}:  C:X\to Y \, \text{compact}\right\}.
\]

For $\alpha>-1$, let $L_\alpha^1$ denote the space of Lebesgue measurable functions $f$ on $\calU$ such that
\begin{equation*}
\|f\|_{1,\alpha} =\int_{\calU} |f(z)| \bfrho(z)^\alpha \,dV(z)<\infty.
\end{equation*}

\begin{theorem}\label{thm:essenstial}
Let $1<p<\infty$ and let $T$ be a bounded linear operator on $A^p$. 
Assume that, for some $-1<\alpha<0$,
\begin{equation}\label{eq:essenstial}
\sup_{z\in\calU} |T_z K_{\bfi}|\in L_{\alpha/p^\prime}^1\quad \text{and} \quad \sup_{z\in\calU} |T_z^* K_{\bfi}|\in L_{\alpha/p}^1.
\end{equation}
Then we have
\[
\|T\|_{e, A^p\to A^p} \simeq \sup_{\|f\|_p\leq 1} \limsup_{z\to\partial\widehat{\calU}} \|T_z f\|_p.
\]
The constants depend only on $n$ and $p$.
\end{theorem}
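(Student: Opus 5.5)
The plan follows the Mitkovski--Wick scheme. The hypothesis \eqref{eq:essenstial} replaces their $L^p$ condition on translated kernels. The two inequalities are proved separately.

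\emph{Lower bound.} Let $C$ be compact on $A^p$ and fix $f\in A^p$ with $\|f\|_p\le 1$. Since $U_z^p$ is an isometry,
\[
\|T_z f\|_p=\|TU_z^pf\|_p\le \|T-C\|+\|CU_z^pf\|_p .
\]
We will show that $U_z^pf\to 0$ weakly in $A^p$ as $z\to\partial\widehat{\calU}$. Then compactness of $C$ gives $\|CU_z^pf\|_p\to0$.

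To see the weak convergence, test against the dense span of kernels $K_w$ in $A^{p'}$. We have $\langle U_z^pf,K_w\rangle=\langle f,(U_z^{p'})^{-1}K_w\rangle$. By Lemma \ref{lem:U_z^pK_w} this is, up to normalization, a pairing of $f$ with $k^{(p')}_{\sigma_z(w)}$-type functions. As $z\to\partial\widehat{\calU}$ with $w$ fixed, $\sigma_z(w)$ escapes to $\partial\widehat{\calU}$, because $\beta(\sigma_z(w),\bfi)=\beta(w,z)\to\infty$. One could use Lemma \ref{lem:k_zweak} here, but it is cleaner to verify the convergence directly from Lemma \ref{lem:weak,convergence}: $U_z^pf$ is bounded, and on compacts $|U_z^pf(u)|\lesssim \bfrho(\sigma_z u)^{-(n+1)/p}\bfrho(z)^{-(n+1)/p}\|f\|_{L^p(D(\sigma_z u,r))}$. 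This tends to $0$ because the mass of $f$ on Bergman balls escaping to the boundary tends to $0$. Taking $\limsup_z$ and then $\sup_f$ gives $\sup_f\limsup_z\|T_zf\|_p\le\|T\|_e$.

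\emph{Upper bound.} This is the main part, and it goes through a localization estimate.

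First, choose a covering of $\calU$ by Bergman balls $D(w_j,r)$ with bounded overlap, and form the associated partition $\{F_j\}$. Let $\Omega_N$ be a compact exhaustion, with $\Omega_N$ growing to $\calU$, built from finitely many balls. The operator $P M_{1_{\Omega_N}}T$ is compact, since it is a restriction to a compact set followed by $P$. Hence $\|T\|_e\le\limsup_N\|PM_{1_{\calU\setminus\Omega_N}}T\|$.

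The key step is the localization (Proposition \ref{prop:localization}): for every $\epsilon>0$ there is $r$ such that
\[
\Big\|T-\sum_j M_{1_{F_j}}TPM_{1_{D(w_j,r)}}\Big\|<\epsilon .
\]
To prove it, write the difference as an integral operator with kernel $\langle TK_w,K_u\rangle$ restricted to $\beta(u,w)>r$. Transfer to $\bfi$ via $\sigma_z$, using Lemma \ref{lem:U_z^pK_w}. This gives the pointwise bounds
\[
|\langle TK_w,K_u\rangle|\lesssim \bfrho(w)^{-(n+1)}\sup_z|T_zK_\bfi|(\sigma_w u)
\]
and the symmetric bound with $T^*$. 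Then apply Schur's test with test function $h=\bfrho^{\gamma}$, with $\gamma$ tied to $\alpha/(pp')$. After the change of variables $\sigma_w$ and \eqref{eq:rhosigma_z1}, the Schur integrals become integrals of $\sup_z|T_zK_\bfi|\,\bfrho^{\alpha/p'}$ over the region $\{\beta(\cdot,\bfi)>r\}$. This is where $\alpha<0$ and the $L^1_{\alpha/p'}$ hypotheses are used. Those tails tend to $0$ as $r\to\infty$ by dominated convergence.

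I expect the main obstacle to be choosing the Schur weight so that both the $T$-side and the $T^*$-side integrals reduce exactly to the hypothesized weighted $L^1$ norms. The term $|\bfrho(u,w)|$ must be controlled by Lemma \ref{lem:elemtryeq1} and Lemma \ref{lem:FRestimate}, and the latter requires $t>-1$, which is consistent with $-1<\alpha<0$.

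Given localization, $\|PM_{1_{\calU\setminus\Omega_N}}T f\|_p^p$ is bounded, up to $\epsilon$, by a sum over far-out $j$ of $\|M_{1_{F_j}}TPM_{1_{D(w_j,r)}}f\|^p$. By bounded overlap, this is at most $\sup_{j\text{ far}}\|M_{1_{F_j}}TP\,\cdot\|$-type quantities times $\|f\|^p$. Each term is then rewritten via $U_{w_j}^p$ as $\|T_{w_j}g\|_p$ with $g$ supported in the fixed ball $D(\bfi,r)$. On that fixed ball, weak compactness and a finite net in the unit ball of the fixed-support functions let us replace $\sup_g$ by the $\sup_f\limsup_z$ quantity up to $\epsilon$. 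Letting $N\to\infty$ and then $\epsilon\to0$ yields $\|T\|_e\lesssim\sup_{\|f\|\le1}\limsup_{z}\|T_zf\|_p$. The constants come from the bounded overlap and depend only on $n$ and $p$.
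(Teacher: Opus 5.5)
Your lower bound is correct: proving $U_z^pf\to0$ weakly is equivalent to the paper's Lemma~\ref{lem:essenstial1}. Your Schur-test computation with $h=\bfrho^{\alpha/(pp')}$ matches the paper's. Replacing the paper's subsequence and dominated-convergence step by a finite $\delta$-net in the totally bounded set $\{PM_{\chi_{D(\bfi,R)}}\phi:\|\phi\|_p\le1\}$ is a valid alternative.

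\textbf{The gap is the covering.} You cover by Bergman balls $D(w_j,r)$, take $F_j\subset D(w_j,r)$, and localize with the same sets $D(w_j,r)$. Then a point $u\in F_j$ near $\partial D(w_j,r)$ has points $w\notin D(w_j,r)$ with $\beta(u,w)$ arbitrarily small. So the kernel of $T-\sum_j M_{1_{F_j}}TPM_{1_{D(w_j,r)}}$ is \emph{not} supported in $\{\beta(u,w)>r\}$, and your Schur integrals need not be small. Localization requires $D(u,r)$ to lie inside the enlarged set for every $u\in F_j$.

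Suppose you separate the scales instead: cover by $D(w_j,1)$ and enlarge to $D(w_j,r+1)$. Then localization works, but the overlap constant is no longer uniform. Every $y\in D(z,r)$ lies in some $D(w_j,1)$, and for that $j$ we have $z\in D(w_j,r+1)$. Hence each point lies in at least $\lambda(D(\bfi,r))/\lambda(D(\bfi,1))$ of the enlarged balls, where $d\lambda=\bfrho^{-(n+1)}dV$, and this grows like $e^{2nr}$.

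Since $r=r(\varepsilon)\to\infty$ as $\varepsilon\to0$, your argument only gives $\|T\|_e\le\|P\|\bigl(\varepsilon+N(r_\varepsilon)^{1/p}\|P\|\sup_f\limsup_z\|T_zf\|_p\bigr)$. This yields the qualitative statement that $\sup_f\limsup_z\|T_zf\|_p=0$ forces $\|T\|_e=0$. It does not yield the two-sided estimate with constants depending only on $n$ and $p$, so your final claim about the constants is unjustified.

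\textbf{The missing ingredient is Lemma~\ref{lem:covering}.} It gives a Borel partition $\{F_j\}$ whose diameters $C_r$ may grow with $r$, while the $r$-neighbourhoods $G_j=\{z:\beta(z,F_j)<r\}$ have multiplicity at most $N=N(n)$, independent of $r$. This comes from the finite asymptotic dimension of complex hyperbolic space (Roe's theorem). Balls of a fixed radius cannot provide it, by the count above.

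With $G_j$ in place of $D(w_j,r)$, the rest of your argument goes through. For $z_j\in G_j$ we have $G_j\subset D(z_j,C_r+3r)$, so $\sigma_{z_j}$ still carries $G_j$ into a fixed Bergman ball about $\bfi$. Your finite-net step then applies as written.
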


Here and in what follows, the notation $T_z^*$ is unambiguous since $(T_z)^* = (T^*)_z$.
So $T_z^*$ can be interpreted in either way whenever it appears.
We shall adapt the method of Mitkovski and Wick \cite[Theorem 4.3]{MW14}.

\begin{lemma}\label{lem:sigma_ztoinfty}
For every $w\in\calU$, we have  $\sigma_z^{-1}(w)\to\partial\widehat{\calU}$ as $z\to\partial\widehat{\calU}$.
\end{lemma}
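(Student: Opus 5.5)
We need to show that for fixed $w\in\calU$, $\sigma_z^{-1}(w)$ leaves every compact subset of $\calU$ as $z\to\partial\widehat{\calU}$. The plan is to use the Bergman metric. A subset of $\calU$ is relatively compact exactly when it is contained in some Bergman ball $D(\bfi,R)$, since $\beta$ is a complete metric whose closed balls are compact. So it suffices to show $\beta(\sigma_z^{-1}(w),\bfi)\to\infty$ as $z\to\partial\widehat{\calU}$.

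By the invariance \eqref{eq:invarianceBeta} and $\sigma_z(z)=\bfi$, we have
\[
\beta(\sigma_z^{-1}(w),\bfi)=\beta(w,\sigma_z(\bfi)),
\]
and by the triangle inequality
\[
\beta(w,\sigma_z(\bfi))\ge \beta(\bfi,\sigma_z(\bfi))-\beta(w,\bfi)=\beta(\sigma_z^{-1}(\bfi),\bfi)-\beta(w,\bfi).
\]
Similarly, $\beta(\sigma_z(\bfi),\bfi)=\beta(\sigma_z(\bfi),\sigma_z(z))=\beta(\bfi,z)$. Therefore
\[
\beta(\sigma_z^{-1}(w),\bfi)\ge \beta(z,\bfi)-\beta(w,\bfi),
\]
and it remains to check that $\beta(z,\bfi)\to\infty$ as $z\to\partial\widehat{\calU}$.

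For this we use the explicit formula in Lemma \ref{blem}. Since $\bfrho(\bfi)=1$,
\[
1-\tanh^2\beta(z,\bfi)=\frac{\bfrho(z)}{|\bfrho(z,\bfi)|^2},\qquad \bfrho(z,\bfi)=\tfrac{i}{2}(-i-z_n)=\tfrac12(1-iz_n).
\]
It suffices to show $\bfrho(z)/|\bfrho(z,\bfi)|^2\to0$, and we split into the two escape regimes.

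\emph{Approach to a finite boundary point.} By Lemma \ref{lem:elemtryeq1}, $|\bfrho(z,\bfi)|\ge 1/2$. If $\bfrho(z)\to0$, the ratio is at most $4\bfrho(z)\to0$.

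\emph{Escape to infinity.} Write $|\bfrho(z,\bfi)|=\tfrac12|1-iz_n|=\tfrac12|i+z_n|$, which is at least $\tfrac12\max\{1+\ImPt z_n,\ |\RePt z_n|\}$. Moreover $\bfrho(z)\le \ImPt z_n$ and $|z'|^2\le \ImPt z_n$. Hence the ratio is bounded by
\[
\frac{4\,\ImPt z_n}{(1+\ImPt z_n)^2+|\RePt z_n|^2}.
\]
If $|z|\to\infty$, then either $|\RePt z_n|\to\infty$ or $\ImPt z_n\to\infty$, since $|z'|^2\le \ImPt z_n$. In either case this bound tends to $0$. (When $|\RePt z_n|\to\infty$ with $\ImPt z_n$ bounded, the numerator stays bounded while the denominator blows up; when $\ImPt z_n\to\infty$, the bound is $O(1/\ImPt z_n)$.)

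Combining the two regimes, $\beta(z,\bfi)\to\infty$ along any approach $z\to\partial\widehat{\calU}$, and the displayed inequality gives the claim.

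The main point needing care is the equivalence between "$\to\partial\widehat{\calU}$" and "$\beta(\cdot,\bfi)\to\infty$". This requires completeness of $\beta$, i.e. that Bergman balls around $\bfi$ are relatively compact in $\calU$. To avoid citing completeness, one can verify this directly from the same formula: if $\beta(u,\bfi)\le R$, then
\[
\bfrho(u)\ge (1-\tanh^2R)\,|\bfrho(u,\bfi)|^2\ge \tfrac14(1-\tanh^2 R),
\]
and by the two-case estimate above $|u|$ is bounded. So $D(\bfi,R)$ lies in a compact subset of $\calU$, which is all the argument requires.
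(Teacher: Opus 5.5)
Your argument is correct, and its skeleton matches the paper's. Both reduce to showing $\beta(\sigma_z^{-1}(w),\bfi)\to\infty$, then use invariance and the triangle inequality to get $\beta(\sigma_z^{-1}(w),\bfi)\ge\beta(z,\bfi)-\beta(w,\bfi)$. The paper does this in one step via $\beta(\sigma_z^{-1}(w),z)=\beta(\sigma_z^{-1}(w),\sigma_z^{-1}(\bfi))=\beta(w,\bfi)$.

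The difference lies in the claim that $z\to\partial\widehat{\calU}$ if and only if $\beta(z,\bfi)\to\infty$. The paper transfers to the ball through the Cayley transform. It uses that $z\to\partial\widehat{\calU}$ exactly when $|\Phi^{-1}(z)|\to 1$, together with the identity $\beta_{\ball}(\Phi^{-1}(z),0)=\beta(z,\bfi)$ from Krantz, so both directions follow at once from the corresponding fact on $\ball$. You instead bound $\bfrho(z)/|\bfrho(z,\bfi)|^2$ by hand in the two escape regimes. The paper's route is shorter given the citation. Yours is self-contained and uses only the formula for $\beta$ already recorded in Lemma~\ref{blem}.

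Two small remarks.

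First, your closing paragraph targets the wrong direction. Your first reduction uses the implication that if $\beta(u,\bfi)\to\infty$ then $u$ eventually leaves every compact subset of $\calU$. That needs only that $\beta(\cdot,\bfi)$ is bounded on compacta, which is immediate from the formula, since $\bfrho\ge c>0$ and $|\bfrho(\cdot,\bfi)|\le M$ there. Relative compactness of $D(\bfi,R)$ (completeness) is what the opposite direction, $z\to\partial\widehat{\calU}\Rightarrow\beta(z,\bfi)\to\infty$, requires. Your two-case computation already proves that, so the closing paragraph re-proves the same fact rather than what the first reduction uses.

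Second, your cases at infinity do not literally exhaust all sequences. However, the quantity $\frac{4\ImPt z_n}{(1+\ImPt z_n)^2+(\RePt z_n)^2}$ is at most both $4/(1+\ImPt z_n)$ and $2/|\RePt z_n|$, which handles every case uniformly.
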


\begin{proof}
We first claim that
$z\to \partial\widehat{\calU}$ if and only if $\beta(z,\bfi)\to\infty$.
Indeed, observe that $z\to \partial\widehat{\calU}$ if and only if $|\Phi^{-1}(z)|\to 1$
if and only if $\beta_\ball(\Phi^{-1}(z),0)\to \infty$, where $\Phi:\ball\to\calU$ is the Cayley transform and $\beta_\ball$ is the Bergman metric of $\ball$.
Recall from  \cite[Proposition 1.4.15]{Kra01} that 
\[
\beta_\ball(\Phi^{-1}(z),0)=\beta(z,\bfi),
\]
proving the claim. 

%Indeed, observe that 
%\begin{equation*}\label{eqn:rho(z,i)}
%\sqrt{|z|^2+1}\leq2|\bfrho(z,\bfi)|\leq |z|+1\quad \text{and} \quad \bfrho(z)\leq |z|.
%\end{equation*}
%Then we have
%\[
%4\bfrho(z)/(|z|+1)^2 \leq \bfrho(z)/|\bfrho(z,\bfi)|^2 \leq 4\min\{\bfrho(z), |z|/(|z|^2+1) \}.
%\]
%This implies that $z\to \partial\widehat{\calU}$ if and only if $\bfrho(z)/|\bfrho(z,\bfi)|^2\to 0$.
%Recall that $\tanh\beta(z,\bfi)=(1-\bfrho(z)/|\bfrho(z,\bfi)|^2)^{1/2}$, proving the claim. 

By the claim, it now suffices to show that $\beta(\sigma_z^{-1}(w),\bfi)\to\infty$ as $z\to\partial\widehat{\calU}$.  
Using the invariance of the Bergman metric \eqref{eq:invarianceBeta}, we have
\[
\beta(\sigma_z^{-1}(w),\bfi) \ge \beta(z,\bfi) - \beta(\sigma_z^{-1}(w),z)
= \beta(z,\bfi) - \beta(w,\bfi).
\]
Since $\beta(z,\bfi)\to\infty$ while $\beta(w,\bfi)$ is fixed, the right-hand side tends to infinity, completing the proof.
\end{proof}

\begin{lemma}\label{lem:essenstial1}
For any compact operator $C$ on $A^p$ and any $f\in A^p$, we have that $\|C_zf\|_p\to 0$ as $z\to\partial\widehat{\calU}$.
\end{lemma}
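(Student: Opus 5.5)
Since $\|C_zf\|_p=\|(U_z^p)^{-1}CU_z^pf\|_p=\|CU_z^pf\|_p$ (as $(U_z^p)^{-1}$ is an isometry on $L^p$ and preserves $A^p$), it suffices to show $\|CU_z^pf\|_p\to0$ as $z\to\partial\widehat{\calU}$. The argument has two steps: a density reduction, and a weak-convergence statement for the translates $U_z^p$ of normalized kernels.

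\emph{Density reduction.} Because $\|U_z^p\|=1$ for every $z$ and $\|C\|<\infty$, the family $f\mapsto \|CU_z^pf\|_p$ is uniformly Lipschitz in $f$. It is therefore enough to prove the claim for $f$ in a dense subset of $A^p$. We take the linear span of the normalized kernels $\{k_w^{(p)}:w\in\calU\}$, whose density is already used in the introduction for the injectivity of the Berezin transform. By linearity, we reduce further to a single $f=k_w^{(p)}$ with $w\in\calU$ fixed.

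\emph{Kernel case.} Corollary \ref{cor:U_z^pk_w} gives $U_z^pk_w^{(p)}=k_{\sigma_z^{-1}(w)}^{(p)}$. Lemma \ref{lem:sigma_ztoinfty} gives $\sigma_z^{-1}(w)\to\partial\widehat{\calU}$ as $z\to\partial\widehat{\calU}$. Then Lemma \ref{lem:k_zweak} shows that $k_{\sigma_z^{-1}(w)}^{(p)}\to0$ weakly in $A^p$. A compact operator maps weakly null nets into norm null ones, so $\|Ck_{\sigma_z^{-1}(w)}^{(p)}\|_p\to0$, which proves the claim for $f=k_w^{(p)}$.

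\emph{Main technical point.} Here $z\to\partial\widehat{\calU}$ is a net limit, so we argue by sequences. Suppose the conclusion fails. Then there are $\varepsilon>0$ and a sequence $z_j\to\partial\widehat{\calU}$ with $\|Ck^{(p)}_{\sigma_{z_j}^{-1}(w)}\|_p\ge\varepsilon$. The sequence $k^{(p)}_{\sigma_{z_j}^{-1}(w)}$ is bounded and weakly null (Lemmas \ref{lem:k_zweak} and \ref{lem:weak,convergence}). By compactness, some subsequence of $Ck^{(p)}_{\sigma_{z_j}^{-1}(w)}$ converges in norm. That limit must also be the weak limit, which is $0$, contradicting the lower bound $\varepsilon$.
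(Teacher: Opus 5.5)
Your proposal is correct and follows the same route as the paper. Like the paper, you reduce by density of the span of the normalized kernels to $f=k_w^{(p)}$, rewrite $C_zk_w^{(p)}$ via Corollary~\ref{cor:U_z^pk_w} in terms of $k^{(p)}_{\sigma_z^{-1}(w)}$, and conclude from Lemmas~\ref{lem:sigma_ztoinfty} and~\ref{lem:k_zweak} together with compactness of $C$. Your explicit justification of the density step through the uniform bound $\|C_z\|=\|C\|$, and your sequential argument for the compactness step, are details the paper leaves implicit.
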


\begin{proof}
Since $span\{k_w^{(p)}:w\in\calU\}$ is dense in $A^p$, it suffices to prove the statement for $f=k_w^{(p)}$ with arbitrary $w\in\calU$. 
It follows from Corollary \ref{cor:U_z^pk_w} that 
\[
\|C_z k_w^{(p)}\|_p= \|(U_z^p)^{-1} C k_{\sigma_z^{-1}(w)}^{(p)}\|_p.
\]
This together with Lemmas \ref{lem:sigma_ztoinfty} and \ref{lem:k_zweak} gives the proof.
\end{proof}

\begin{lemma}\label{lem:T_G}
For every $\beta$-bounded subset $G$ of  $\calU$, the operator $PM_{\chi_G}$ is compact on $A^p$.
\end{lemma}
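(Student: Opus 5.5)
We show that the restriction of $PM_{\chi_G}$ to $A^p$ sends weakly null sequences to norm null sequences. Since $A^p$ is reflexive for $1<p<\infty$, this is equivalent to compactness.

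Two preliminary facts come first. Because $G$ is $\beta$-bounded, there is $R>0$ with $G\subset D(\bfi,R)$. The closure $\overline{D(\bfi,R)}$ is a compact subset of $\calU$, by the claim in the proof of Lemma \ref{lem:sigma_ztoinfty}: $\beta(z,\bfi)\to\infty$ exactly when $z\to\partial\widehat{\calU}$. In particular, $\bfrho$ is bounded above and below on $D(\bfi,R)$, and $G$ has finite volume.

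Now let $f_j\to 0$ weakly in $A^p$. By Lemma \ref{lem:weak,convergence}, $\sup_j\|f_j\|_p<\infty$ and $f_j\to 0$ uniformly on compact subsets of $\calU$, in particular on $\overline{D(\bfi,R)}$. Using the boundedness of $P$ on $L^p$, we get
\[
\|PM_{\chi_G}f_j\|_p\le \|P\|_{L^p\to L^p}\,\|\chi_G f_j\|_p\le \|P\|\, V(G)^{1/p}\sup_{u\in \overline{D(\bfi,R)}}|f_j(u)|\longrightarrow 0 .
\]
Hence $PM_{\chi_G}$ maps weakly null sequences to norm null sequences. Since $A^p$ is reflexive, every bounded sequence has a weakly convergent subsequence. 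Applying the estimate to differences then shows that the image of the unit ball is relatively compact, so $PM_{\chi_G}$ is compact.

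As an alternative check, one could write $PM_{\chi_G}f(z)=\int_G K(z,w)f(w)\,dV(w)$. Lemma \ref{lem:elemtryeq1} gives $|K(z,w)|\lesssim \bfrho(z)^{-n-1}$ for $w\in G$, and Lemma \ref{lem:FRestimate} with $t=0$, $s=(n+1)p$ shows $\sup_{w\in G}\|K_w\|_p<\infty$, where $\|K_w\|_p\simeq\bfrho(w)^{-(n+1)/p'}$. The operator is then seen directly to be a norm limit of finite-rank operators, but the weak-sequence argument above is simpler.

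The only point needing care is the compactness of $\overline{D(\bfi,R)}$ inside $\calU$. This follows from the proof of Lemma \ref{lem:sigma_ztoinfty}: points near $\partial\widehat{\calU}$ have large $\beta$-distance from $\bfi$, so a $\beta$-bounded set stays away from $\partial\widehat{\calU}$. Everything else is routine.
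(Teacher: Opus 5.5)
Your proof is correct and takes essentially the same approach as the paper: both show that $PM_{\chi_G}$ sends weakly null sequences in $A^p$ to norm null ones, using that $G$ has finite volume and stays inside a compact part of $\calU$. The only difference is cosmetic. The paper combines a uniform bound for bounded sequences on $\beta$-bounded sets (cited from earlier work) with pointwise convergence and dominated convergence, while you use the uniform convergence on compacta from Lemma \ref{lem:weak,convergence} together with the relative compactness of $D(\bfi,R)$ in $\calU$.
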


\begin{proof}
Let $\{f_j\}\subset A^p$ be a sequence converging weakly to zero. 
Such a sequence must be bounded, say, $\|f_j\|_p\leq C_1$ for all $j$.
This implies by \cite[Corollary 2.10]{LS20} that $\{f_j\}$ is uniformly bounded on each $\beta$-bounded subset of $\calU$, namely, $|f_j(z)|\leq C_2$ for all $z\in G$ and all $j$.
On the other hand, $f_j\xrightarrow{w} 0$ implies that $f_j\to 0$ pointwise by the reproducing property of the kernel $K_z\in A^{p^\prime}$. Therefore, we may apply the Lebesgue dominated theorem to conclude that 
\[
\| PM_{\chi_G} f_j\|_p \leq \|P\|_{L^p\to A^p} \left(\int_{G} |f_j(z)|^p dV(z)\right)^{1/p} \to 0
\]
as $j\to\infty$.
It follows that $PM_{\chi_G}$ maps weakly convergent sequences into norm convergent ones, and so is compact.
\end{proof}

\begin{lemma}\label{lem:covering}
There exists a positive integer $N=N(n)$ such that for any $r>0$ one can find a locally finite Borel partition $\mathcal{F}_r=\{F_j\}$ of $\calU$ satisfying the following properties:
\begin{itemize}
\item[(i)] each point belongs to at most $N$ of the sets $G_j:=\{z\in\calU: \beta(z,F_j)< r\}$.
\item[(ii)] $\sup_j \text{diam}_\beta F_j \leq C_r<\infty$.
\end{itemize}
The partition can be enumerated so that $\beta(\bfi,F_j)\to\infty$.
\end{lemma}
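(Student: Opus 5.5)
The idea is to build $\mathcal{F}_r$ as a "dyadic box" decomposition adapted to the solvable group structure of $\calU$. The point is to let the box sizes grow with $r$ fast enough that the $r$-neighbourhood of a box stays inside a bounded number of neighbouring boxes, with that number independent of $r$. This is what forces $N=N(n)$ while leaving $C_r$ free to depend on $r$.

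\textbf{Coordinates and the two elementary estimates.} Write $z\in\calU$ through $h=h(z)=(z',\RePt z_n)$, a point of the Heisenberg group $\mathbb{H}^{n-1}=\bbC^{n-1}\times\mathbb{R}$, together with the height $\bfrho(z)$. The map $z\mapsto(h(z),\bfrho(z))$ is a bijection onto $\mathbb{H}^{n-1}\times(0,\infty)$. The maps $h_z$ act as left Heisenberg translations fixing the height, and $\delta_t$ acts as the Heisenberg dilation combined with multiplication of the height by $t^2$. Let $|\cdot|_{\mathbb{H}}$ be a homogeneous Heisenberg gauge. From the formula for $\beta$, \eqref{eq:rhosigma_z1} and Lemma \ref{blem}, I would derive two facts. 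First, there is an absolute constant $c_0$ such that $\beta(z,w)<r$ implies $e^{-c_0 r}\le\bfrho(z)/\bfrho(w)\le e^{c_0r}$ and $|h(w)^{-1}h(z)|_{\mathbb{H}}\le e^{c_0 r}\bfrho(z)^{1/2}$. I would first check this at $w=\bfi$, where $|\bfrho(z,\bfi)|\le e^{c r}$ pins down both quantities, and then transport it to general $w$ with $\sigma_w$. Second, conversely, if $\bfrho(z),\bfrho(w)\in[a,Aa]$ and $|h(w)^{-1}h(z)|_{\mathbb{H}}\le Ba^{1/2}$, then $\beta(z,w)\le C(1+\log A+\log B)$.

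\textbf{Construction.} Set $\lambda=e^{4c_0r}$ and $s_k=M e^{2c_0 r}\lambda^{(k+1)/2}$, where $M$ is a large absolute constant to be fixed. Fix a fundamental domain $Q$ of a lattice $\Gamma\subset\mathbb{H}^{n-1}$; for instance take $\Gamma=\mathbb{Z}^{2n-2}\times\tfrac12\mathbb{Z}$ with the standard Heisenberg law and $Q$ a bounded Borel set. Then the dilates $\{\gamma\cdot D_{s}Q\}_{\gamma\in D_s\Gamma}$ tile $\mathbb{H}^{n-1}$ by sets of gauge diameter $\simeq s$. Define the sets
\[
F_{k,\gamma}=\{z:\ \lambda^k\le\bfrho(z)<\lambda^{k+1},\ h(z)\in\gamma\cdot D_{s_k}Q\},\qquad k\in\mathbb{Z},\ \gamma\in D_{s_k}\Gamma .
\]
These form a Borel partition of $\calU$. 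By the second fact, with $A=\lambda$ and $B\simeq Me^{2c_0r}\lambda^{1/2}$, we get $\operatorname{diam}_\beta F_{k,\gamma}\le C_r=C(1+r)$, which is (ii). Local finiteness follows because a $\beta$-bounded set meets only finitely many layers, and finitely many boxes in each. Enumerating the countable family in nondecreasing order of $\beta(\bfi,F_j)$ gives $\beta(\bfi,F_j)\to\infty$.

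\textbf{Bounded overlap (the main step).} Fix $x\in\calU$ and suppose $x\in G_{k,\gamma}$, so $\beta(x,y)<r$ for some $y\in F_{k,\gamma}$. By the first fact, $\bfrho(x)\in[e^{-c_0r}\lambda^k,e^{c_0r}\lambda^{k+1})$. Since $e^{c_0 r}<\lambda$, at most three values of $k$ are possible. For such a $k$, the first fact also gives
\[
|h(y)^{-1}h(x)|_{\mathbb{H}}\le e^{c_0r}\bfrho(x)^{1/2}\le e^{2c_0r}\lambda^{(k+1)/2}=s_k/M .
\]
So every admissible box of layer $k$ meets a gauge ball around $h(x)$ of radius $s_k/M$. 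Since these boxes are left translates of $D_{s_k}Q$, which has diameter $\simeq s_k$, the number of such boxes is bounded by a constant depending only on $n$, $Q$ and $M$, and not on $r$. This is a homogeneity/volume count in $\mathbb{H}^{n-1}$. Hence $N=3\cdot(\text{that constant})$ depends only on $n$, which is (i).

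I expect the delicate part to be choosing the scales. The box size $s_k$ has to be tied to the \emph{top} height $\lambda^{k+1}$ of its layer, and the layer ratio $\lambda$ has to beat the height distortion $e^{c_0 r}$. If either of these is not arranged, the count in adjacent layers picks up a factor like $\lambda^{n}$ and $N$ becomes $r$-dependent. The two elementary estimates relating $\beta$ to height and Heisenberg gauge are routine but must be checked carefully, with absolute constants.
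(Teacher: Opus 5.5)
Your proposal is correct, and it takes a genuinely different route from the paper's. The paper constructs nothing explicitly. It views $(\calU,\beta)$ as complex hyperbolic space and invokes Roe's theorem (Gromov-hyperbolic geodesic spaces with bounded growth have finite asymptotic dimension). This gives, at scale $2r$, a uniformly bounded cover $\{E_j\}$ such that every $\beta$-ball of radius $2r$ meets at most $N$ members, and the paper then disjointifies it.

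You instead build the partition by hand from the structure $\calU\cong\mathbb{H}^{n-1}\times(0,\infty)$ encoded in $h_z$ and $\delta_t$. Your two elementary facts do hold in the form you need.
\begin{itemize}
\item Height bound: the identity $\RePt\bfrho(z,w)=\tfrac12(\bfrho(z)+\bfrho(w))+\tfrac12|z'-w'|^2$ gives $\cosh\beta(z,w)=|\bfrho(z,w)|/\sqrt{\bfrho(z)\bfrho(w)}\ge\cosh\bigl(\tfrac12|\log(\bfrho(z)/\bfrho(w))|\bigr)$. So the bound holds with $c_0=2$ and no multiplicative constant. You do need this: a constant in front would make the number of admissible layers blow up as $r\to0$. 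Alternatively, restrict to $r\ge1$, since the partition built for $r=1$ serves every smaller $r$.
\item Horizontal bound and its converse: these follow from $|\bfrho(z,\bfi)|^2=\tfrac14\bigl[(1+\bfrho(z)+|z'|^2)^2+(\RePt z_n)^2\bigr]$, transported by $\sigma_w$.
\item Lattice: $\mathbb{Z}^{2n-2}\times\tfrac12\mathbb{Z}$ is indeed a subgroup for the law $(a',s)(b',t)=(a'+b',s+t+2\ImPt(a'\cdot\overline{b'}))$ induced by the $h_z$.
\end{itemize}

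What each route buys:
\begin{itemize}
\item Your route is self-contained and uses only formulas already in the paper. Borel measurability, local finiteness, the enumeration with $\beta(\bfi,F_j)\to\infty$, and an explicit $C_r=O(1+r)$ are all immediate, and $N$ comes from a volume count in $\mathbb{H}^{n-1}$.
\item The paper's route is much shorter and uses only coarse hyperbolic geometry, so it does not depend on the homogeneous group structure. However, it rests on a deep external theorem and leaves the Borel measurability of the $E_j$ to be arranged separately.
\end{itemize}
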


\begin{proof}
Up to normalization, $(\calU,\beta)$ is complex hyperbolic $n$-space.  It is a
Gromov-hyperbolic geodesic space with bounded growth at some scale, so
Roe's metric-space theorem gives finite asymptotic dimension \cite{Roe2005};
see also Section~2.5 of the corrected arXiv version of
\cite{MW14}.  At scale $2r$, this
gives an integer $N=N(n)$ and a uniformly bounded, locally finite cover $\{E_j\}$ such that
every ball of radius $2r$ meets at most $N$ members of the cover.  After
enumerating the cover, set
\[
 F_1=E_1,\qquad F_j=E_j\setminus\bigcup_{k<j}E_k,
\]
and discard empty sets.  Then $\{F_j\}$ is a Borel partition and
$\text{diam}_\beta F_j\leq C_r:=\sup_j\text{diam}_\beta E_j$.
If $z\in G_j$, then $D(z,r)$ meets $F_j\subset E_j$, so (i) follows.  If
$G_j$ meets $D(z,r)$, then $E_j$ meets $D(z,2r)$; hence $\{G_j\}$ is locally
finite as well.  Properness now permits an enumeration for which
$\beta(\bfi,F_j)\to\infty$.
\end{proof}

The proof of Theorem \ref{thm:essenstial} relies on the following localization property.

\begin{proposition}\label{prop:localization}
Let $1<p<\infty$ and let $T$ be a bounded linear operator on $A^p$. 
Assume that, for some $-1<\alpha<0$,
\[
\sup_{z\in\calU} |T_z K_{\bfi}|\in L_{\alpha/p^\prime}^1 \quad \text{and} \quad \sup_{z\in\calU} |T_z^* K_{\bfi}|\in L_{\alpha/p}^1.
\]
Then for any $\varepsilon>0$ 
there exists $r>0$ such that for the locally finite Borel partition $\mathcal{F}_r=\{F_j\}$ of $\calU$ (from Lemma \ref{lem:covering}), we have
\[
\|T-\sum_j M_{\chi_{F_j}} TP M_{\chi_{G_j}}\|_{A^p\to L^p} <\varepsilon.
\]
\end{proposition}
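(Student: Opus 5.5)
We follow the Mitkovski--Wick scheme \cite[Theorem 4.3]{MW14}. Since $Tf=PTf$ and $\sum_j M_{\chi_{F_j}}=I$, for $f\in A^p$ we write
\[
Tf-\sum_j M_{\chi_{F_j}}TPM_{\chi_{G_j}}f=\sum_j M_{\chi_{F_j}}TPM_{\chi_{G_j^c}}f .
\]
For $z\in F_j$ we use the reproducing formula $Tg(z)=\langle Tg,K_z\rangle=\langle g,T^*K_z\rangle$. The difference is therefore dominated pointwise by the integral operator
\[
Ef(z)=\int_{\calU}\Big(\sum_j\chi_{F_j}(z)\chi_{G_j^c}(w)\Big)|T^*K_z(w)|\,|f(w)|\,dV(w).
\]
Since $\beta(F_j,G_j^c)\ge r$, the kernel is at most $\chi_{\{\beta(z,w)\ge r\}}|T^*K_z(w)|$. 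It therefore suffices to show that the integral operator $S_r$ with kernel $R_r(z,w)=\chi_{\{\beta(z,w)\ge r\}}|T^*K_z(w)|$ has $L^p$ norm tending to $0$ as $r\to\infty$. (Strictly, we should use the $f$-dependent kernel; bounding by $R_r$ is enough.)

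To control $T^*K_z$, we conjugate by $U_z$. Using $(U_z^p)^*=(U_z^{p'})^{-1}$ and Lemma \ref{lem:U_z^pK_w} with $\sigma_z^{-1}(\bfi)=z$, we have $K_z=c\,\bfrho(z)^{-(n+1)/p}U_z^{p'}K_{\bfi}$. This gives
\[
T^*K_z=c\,\bfrho(z)^{-(n+1)/p}(U_z^{p'})^{-1}...
\]
More precisely, $T^*U_z^{p'}=U_z^{p'}(T^*)_z$ with the $p'$-conjugation, so $T^*K_z(w)=c\,\bfrho(z)^{-(n+1)}\,(T^*_zK_{\bfi})(\sigma_z(w))$. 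Set $\Psi^*=\sup_z|T_z^*K_\bfi|$ and $\Psi=\sup_z|T_zK_\bfi|$. Then $|T^*K_z(w)|\lesssim \bfrho(z)^{-(n+1)}\Psi^*(\sigma_z(w))$. By the symmetry $\overline{T^*K_z(w)}=TK_w(z)$ we also get $|T^*K_z(w)|\lesssim\bfrho(w)^{-(n+1)}\Psi(\sigma_w(z))$.

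We then apply Schur's test with the test function $h(w)=\bfrho(w)^{s}$, with $s$ tied to $\alpha$, for instance $h^{p'}=\bfrho^{\alpha/p'}$-type weights.

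\textbf{First Schur integral.} We use the second pointwise bound. The change of variables $u=\sigma_w(z)$ uses $J_R\sigma_w=\bfrho(w)^{-(n+1)}$ and $\bfrho(\sigma_w^{-1}u)=\bfrho(w)\bfrho(u)$ from \eqref{eq:rhosigma_z1}. It gives
\[
\int R_r(z,w)h(z)^{p}\,dV(z)\lesssim h(w)^p\int_{\{\beta(u,\bfi)\ge r\}}\Psi(u)\bfrho(u)^{sp}\,dV(u).
\]
With $sp=\alpha/p'$... one must pick a single $s$ making both weights match the hypotheses, i.e.\ $sp'\!=\alpha/p$ and $sp=\alpha/p'$. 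Both hold for $s=\alpha/(pp')$. The remaining integral is the tail of $\|\Psi\|_{1,\alpha/p'}$ outside the Bergman ball $D(\bfi,r)$, which tends to $0$ as $r\to\infty$ by dominated convergence.

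\textbf{Second Schur integral.} Symmetrically, we use the first bound and integrate in $w$ with $u=\sigma_z(w)$. This gives $h(z)^{p'}$ times the tail of $\|\Psi^*\|_{1,\alpha/p}$.

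Schur's test then yields $\|S_r\|\le C_1(r)^{1/p'}C_2(r)^{1/p}\to0$, and we choose $r$ accordingly.

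The main obstacle is the bookkeeping of the exponents of $\bfrho(z)$ and $\bfrho(w)$. After the change of variables, all powers of $\bfrho(w)$ must cancel exactly so that the bound is uniform; this is where the sign condition $-1<\alpha<0$ and the specific weights $\alpha/p'$, $\alpha/p$ are forced. The condition $\alpha>-1$ guarantees local integrability, and $\alpha<0$ ensures the factor $\bfrho(\sigma^{-1}u)^{s}$ splits correctly. If the exponent matching fails, the fallback is to first verify carefully the formula $T^*K_z(w)=\bfrho(z)^{-(n+1)}(T_z^*K_\bfi)(\sigma_z w)$, and the analogous formula with $p$ and $p'$ interchanged, and then re-solve for $s$.
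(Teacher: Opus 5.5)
Your argument is correct and is essentially the paper's proof: you dominate the kernel $\sum_j\chi_{F_j}(z)\chi_{G_j^c}(w)|T^*K_z(w)|$ by its restriction to $\{\beta(z,w)\ge r\}$ and transport to $\bfi$ via $\sigma_z$ (resp.\ $\sigma_w$). You then run Schur's test with $h=\bfrho^{\alpha/(pp')}$, so that the two Schur constants are the tails outside $D(\bfi,r)$ of $\sup_z|T_z^*K_\bfi|$ in $L^1_{\alpha/p}$ and of $\sup_z|T_zK_\bfi|$ in $L^1_{\alpha/p'}$, exactly as in the paper. One correction to your closing remark: the identity $\bfrho(\sigma_w^{-1}(u))=\bfrho(w)\bfrho(u)$ splits every real power, so the sign condition $\alpha<0$ plays no role in this proposition; it is needed only later, when verifying the hypothesis for finite sums of Toeplitz products.
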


\begin{proof}
Clearly,  the kernel function of $T-\sum_j M_{\chi_{F_j}} TP M_{\chi_{G_j}}$ on $A^p$ is given by 
\[
\sum_j \chi_{F_j}(z) \chi_{G_j^c}(w) \overline{\langle T^* K_z, K_w \rangle}.
\]
Set 
\[
R_r(z,w)=\sum_j \chi_{F_j}(z) \chi_{G_j^c}(w) |\langle T^* K_z, K_w \rangle|
\] 
and the corresponding operator
\[
E_r f(z)=\int_{\calU} R_r(z,w) f(w)\, dV(w), \quad f\in A^p .
\]

For a fixed $z\in\calU$, since $\{F_j\}$ is a partition of $\calU$, there is a unique $j_0$ such that $z\in F_{j_0}$ and consequently $D(z,r)\subset G_{j_0}$. 
Then, by Lemma \ref{lem:U_z^pK_w} we have
\begin{align*}
\int_{\calU}& R_r(z,w) \bfrho(w)^{\alpha/p} \,dV(w)=\int_{G_{j_0}^c}  |\langle T^* K_z, K_w \rangle| \bfrho(w)^{\alpha/p} \,dV(w)\\
&\leq\int_{D(z,r)^c} |\langle T^* K_z, K_w \rangle| \bfrho(w)^{\alpha/p} \,dV(w)\\
&=\int_{D(\bfi,r)^c} |\langle T^* K_z, K_{\sigma_z^{-1}(w)} \rangle| \bfrho(\sigma_z^{-1}(w))^{\alpha/p} \bfrho(z)^{n+1}\,dV(w)\\
&=\bfrho(z)^{\alpha/p} \int_{D(\bfi,r)^c}  |\langle T^* U_z^p K_\bfi, U_z^{p^\prime}K_w \rangle| \bfrho(w)^{\alpha/p}  \,dV(w)\\
&=\bfrho(z)^{\alpha/p} \int_{D(\bfi,r)^c} |T_z^* K_{\bfi}(w)| \bfrho(w)^{\alpha/p}  \,dV(w):=C_1(r,z) \bfrho(z)^{\alpha/p}.
\end{align*}
For a fixed $w\in\calU$, let $J$ be a subset of all indices $j$ such that $w\notin G_j$. 
Then $\cup_{j\in J} F_j \subset D(w,r)^c$. Thus,   
\begin{align*}
\int_{\calU}  R_r(z,w) \bfrho(z)^{\alpha/p^\prime} \,dV(z)&=\int_{\cup_{j\in J} F_j } |\langle T^* K_z, K_w \rangle| \bfrho(z)^{\alpha/p^\prime} \,dV(z)\\
&\leq \int_{D(w,r)^c} |\langle T^* K_z, K_w \rangle| \bfrho(z)^{\alpha/p^\prime} \,dV(z)\\
&=\int_{D(w,r)^c} |\langle T K_w, K_z \rangle| \bfrho(z)^{\alpha/p^\prime} \,dV(z).
\end{align*}
Applying the same estimates as above, the last integral equals
\[
\bfrho(w)^{\alpha/p^\prime} \int_{D(\bfi,r)^c} |T_w K_{\bfi}(z)| \bfrho(z)^{\alpha/p^\prime}  \,dV(z):=C_2(r,w) \bfrho(w)^{\alpha/p^\prime}.
\]

For any $\varepsilon>0$, the hypothesis on $T$ implies that $C_1(r,z), C_2(r,w)<\varepsilon$ uniformly for all $z,w\in\calU$ provided $r$ is sufficiently large.
Applying Schur's test to the operator $E_r$ with testing function $h(z)=\bfrho(z)^{\alpha/(pp^\prime)}$,
it follows that $E_r$ is bounded from $A^p$ to $L^p$ with $\|E_r\|_{A^p \to L^p}<\varepsilon$. 
This completes the proof of the proposition.
\end{proof}

\begin{proof}[Proof of Theorem \ref{thm:essenstial}]
For any compact operator $C$ on $A^p$ and any $f\in A^p$ with $\|f\|_p\leq 1$, by Lemma \ref{lem:essenstial1} we have
\[
\limsup_{z\to\partial\widehat{\calU}} \|T_z f\|_p
\leq \limsup_{z\to\partial\widehat{\calU}} \|(T-C)_z f\|_p+\limsup_{z\to\partial\widehat{\calU}} \|C_z f\|_p
\leq \|T-C\|.
\]
It immediately follows that 
\[
\sup_{\|f\|_p\leq 1} \limsup_{z\to\partial\widehat{\calU}} \|T_z f\|_p \leq \|T\|_{e, A^p \to A^p}.
\]

For the reverse estimate, observe first that
\[
 \|T\|_{e,A^p\to L^p}
 \leq \|T\|_{e,A^p \to A^p}
 \leq \|P\|_{L^p\to A^p} \|T\|_{e,A^p\to L^p}.
\]
The first inequality follows from the isometric inclusion $A^p\hookrightarrow
L^p$.  For the second, if $C^{\prime}:A^p\to L^p$ is compact, then $PC^{\prime}:A^p\to A^p$
is compact and, since $PT=T$,
\[
 \|T-PC^{\prime}\|_{A^p\to A^p}
 \leq \|P\|_{L^p\to A^p} \|T-C^{\prime}\|_{A^p\to L^p}.
\]
Therefore, it suffices to estimate $\|T\|_{e,A^p\to L^p}$. 

Given $\varepsilon>0$, choose $r$ and the partition from Proposition \ref{prop:localization} . Then we have
\[
\|T-\sum_j M_{\chi_{F_j}} TP M_{\chi_{G_j}}\|_{A^p\to L^p} <\varepsilon.
\]
Note from Lemma \ref{lem:T_G} that the finite sum  $\sum_{j< m} M_{\chi_{F_j}} TP M_{\chi_{G_j}}$ is compact for every $m\in\mathbb{N}$. Hence,
\begin{align*}
\|T\|_{e,A^p\to L^p} &\leq \|TP-\sum_{j< m} M_{\chi_{F_j}} TP M_{\chi_{G_j}}\|_{A^p\to L^p}\\
&\leq \|TP-\sum_j M_{\chi_{F_j}} TP M_{\chi_{G_j}}\|_{A^p\to L^p} +\|T_m\|_{A^p\to L^p}\\
&< \varepsilon +\|T_m\|_{A^p\to L^p},
\end{align*}
where 
\[
T_m= \sum_{j\geq m}  M_{\chi_{F_j}} TP M_{\chi_{G_j}}.
\]
Thus, the proof reduces to showing that
\[
\limsup_{m\to\infty} \|T_m\|_{A^p\to L^p} \lesssim \sup_{\|f\|_p\leq 1} \limsup_{z\to\partial\widehat{\calU}} \|T_z f\|_p .
\]

Take $f\in A^p$ with $\|f\|_p \leq 1$. Since $\{F_j\}$ are disjoint, we have
\begin{align*}
\|T_m f\|_p^p &= \sum_{j\geq m} \| M_{\chi_{F_j}} TP M_{\chi_{G_j}} f\|_p^p\\
&= \sum_{j\geq m} \left\|M_{\chi_{F_j}} T\left( \frac{P M_{\chi_{G_j}}f}{\|M_{\chi_{G_j}} f\|_p}\right) \right\|_p^p \|M_{\chi_{G_j}} f\|_p^p \leq N \sup_{j\geq m} \|T l_j\|_p^p,
\end{align*}
where $N$ comes from Lemma \ref{lem:covering} and 
\[
l_j= \frac{P M_{\chi_{G_j}}f}{\|M_{\chi_{G_j}} f\|_p}.
\]
Therefore, 
\[
\limsup_{m\to\infty} \|T_m\|_{A^p\to L^p} \leq N^{1/p}
\limsup_{j\to\infty} \sup_{\|f\|_p\leq 1} \left\{ \|Tl_j\|_p: l_j= \frac{P M_{\chi_{G_j}}f}{\|M_{\chi_{G_j}} f\|_p} \right\}.
\]
For convenience, let $d\lambda(w)=\bfrho(w)^{-(n+1)}dV(w)$. It is a Möbius invariant measure on $\calU$. 
For each $j$, choose a function $f_j$ in $A^p$ with $\|f_j\|_p\leq 1$ such that
\[
 \sup_{\|f\|_p\leq 1} \left\{ \|T l_j\|_p: l_j= \frac{P M_{\chi_{G_j}}f}{\|M_{\chi_{G_j}} f\|_p} \right\} 
 \leq \|T g_j\|_p+\varepsilon,
\]
where
\begin{align*}
g_j= \frac{P M_{\chi_{G_j}}f_j}{\|M_{\chi_{G_j}} f_j\|_p}
=\frac{ C_p\int_{G_j} \langle f_j, k_w^{(p^\prime)}\rangle k_w^{(p)} \,d\lambda(w)}{\left(\int_{G_j} |\langle f_j, k_w^{(p^\prime)}\rangle|^p \,d\lambda(w)\right)^{1/p}}
= \int_{G_j} a_j(w) k_w^{(p)} \, d\lambda(w),
\end{align*}
with
\[
a_j(w)=\frac{C_p \langle f_j, k_w^{(p^\prime)}\rangle}{\left(\int_{G_j} |\langle f_j, k_w^{(p^\prime)}\rangle|^p \,d\lambda(w)\right)^{1/p}}.
\]
Hence,
\[
\limsup_{m\to\infty} \|T_m\|_{A^p\to L^p} \leq N^{1/p} \left(\limsup_{j\to\infty}  \|T g_j\|_p+\varepsilon\right).
\]

For each $j$, pick $z_j \in G_j$.  It is obvious that $z_j \to \partial\widehat{\calU}$.
By a change of variables $w\mapsto \sigma_{z_j}^{-1}(w)$, we have
\[
g_j= \int_{\sigma_{z_j}(G_j)} a_j(\sigma_{z_j}^{-1}(w)) U_{z_j}^p k_w^{(p)} \, d\lambda(w).
\]
Define
\[
h_j=  \int_{\sigma_{z_j}(G_j)} a_j(\sigma_{z_j}^{-1}(w)) k_w^{(p)} \,d\lambda(w).
\]
Observe that $G_j\subset D(z_j,l_r)$with $l_r=C_r+3r$, and consequently $\sigma_{z_j}(G_j)\subset D(\bfi,l_r)$.
Applying Hölder's inequality together with Lemmas \ref{blem} and \ref{lem:volBergmanball}, we obtain
\begin{align*}
|h_{j}(z)| &\leq \left(\int_{\sigma_{z_j}(G_j)} |a_j(\sigma_{z_j}^{-1}(w))|^p \,d\lambda(w) \right)^{1/p}
\left(\int_{\sigma_{z_j}(G_j)} |k_w^{(p)}(z)|^{p^\prime} \,d\lambda(w) \right)^{1/p^\prime}\\
&\lesssim \left( \int_{G_j} |a_j(w)|^p \,d\lambda (w)\right)^{1/p}
\left( \int_{D(\bfi,l_r)} | \bfrho(z,w)|^{-(n+1)p^\prime} \,dV(w)\right)^{1/p^\prime}\\
&\lesssim |\bfrho(z,\bfi)|^{-(n+1)} V(D(\bfi,l_r))^{1/p^\prime} \lesssim |\bfrho(z,\bfi)|^{-(n+1)}.
\end{align*} 
Here the constants depend only on $r$.
By Lemma \ref{lem:FRestimate}, $|\bfrho(\cdot, \bfi)|^{-p(n+1)}$ is integrable.

We claim that $g_j=U_{z_j}^p h_j$. Since $(U_{z_j}^p)^*=(U_{z_j}^{p^\prime})^{-1}$, it suffices to verify that 
$\langle g_j,b\rangle=\langle h_j, (U_{z_j}^{p^\prime})^{-1}b \rangle$ for every $b\in A^{p^\prime}$.
This can be achieved by a straightforward calculation.
Consequently,
\[
\|h_j\|_p=\|g_j\|_p \leq \|P\|_{L^p \to A^p},
\]
and
\[
 \limsup_{j\to\infty} \|T g_j\|_p
=\limsup_{j\to\infty} \|TU_{z_j}^p h_j\|_p =\limsup_{j\to\infty} \|T_{z_j}h_j\|_p.
\]

Now pick a subsequence $\{h_{j_k}\}$ such that 
\[
\limsup_{j\to\infty} \|T_{z_j} h_{j}\|_p\leq \lim_{k\to\infty} \|T_{z_{j_k}} h_{j_k}\|_p+\varepsilon.
\]
Since $\{h_{j_k}\}$ is bounded in $A^p$, the Banach-Alaoglu theorem implies that, after passing to a further subsequence if necessary,  it converges weakly to some $h\in A^p$. 
By Lemma \ref{lem:weak,convergence}, $h_{j_k}\to h$ pointwise. Then the dominated convergence theorem yields that $h_{j_k}\to h$ in $A^p$.
In particular, $\|h\|_p\leq \|P\|_{L^p \to A^p}$.
Therefore, we have
\begin{align*}
&\lim_{k\to\infty} \|T_{z_{j_k}} h_{{j_k}}\|_p
\leq \limsup_{k\to\infty}  \|T_{z_{j_k}} (h_{{j_k}}-h)\|_p+ \limsup_{k\to\infty} \|T_{z_{j_k}}h\|_p\\
&\leq \lim_{k\to\infty} \|T\| \,\|(h_{{j_k}}-h)\|_p+ \limsup_{k\to\infty} \|T_{z_{j_k}}h\|_p
\leq \|P\|_{L^p\to A^p}  \sup_{\|f\|_p\leq 1} \limsup_{z\to\partial\widehat{\calU}} \|T_z f\|_p.
\end{align*}

Putting everything together, we obtain
\[
\limsup_{m\to\infty} \|T_m\|_{A^p\to L^p}
\leq N^{1/p} \left(\|P\|_{L^p\to A^p} \sup_{\|f\|_p\le 1} \limsup_{z\to\partial\widehat{\calU}} \|T_z f\|_p + \varepsilon\right).
\]
Since $\varepsilon$ is arbitrary, the desired inequality follows. The proof of the theorem is complete.
\end{proof}

\section{Proof of Theorem \ref{thm:main}}

In the final section we prove Theorem \ref{thm:main}.
Another key ingredient is Lemma \ref{lem:supS_z}.
We first record the logarithmic integral estimate needed below.
For $a>-1$, $c\geq 0$, put
\[
J_{a,b,c}(\eta)=\int_{\ball} \frac{(1-|\xi|^2)^a l(\xi)^c}{|1-\eta\cdot\overline{\xi}|^{n+1+a+b}} \,dV(\xi), 
\]
where
\[
l(\xi)=\log\frac{e}{1-|\xi|^2}, \quad \xi\in\ball.
\]
It follows from \cite[Theorem 3.1]{ZLSG18} that $J_{a,b,c}(\eta)$ is bounded for $\eta\in\overline{\ball}$ if $b<0$,
and 
\[
J_{a,0,c}(\eta)\simeq l(\eta)^{c+1}, \quad \text{for}\,\, \eta\in\ball,
\]
where the constants depend on $a,b,c$ and $n$.

Recall that the Cayley transform $\Phi:\ball\to \calU$ is given by
\[
(z^{\prime}, z_{n})\; \longmapsto\; \left( \frac {z^{\prime}}{1+z_{n}},
i\left(\frac {1-z_{n}}{1+z_{n}}\right) \right).
\]
A straightforward calculation shows that
\[
\bfrho(\Phi(\xi),\Phi(\eta)) = \frac {1-\xi\cdot \overline{\eta}} {(1+\xi_{n}) (1+\overline{\eta}_{n})}
\]
for all $\xi,\eta\in \ball$, and
the real Jacobian of $\Phi$ at $\xi\in \ball$ is
\[
\left(J_{R}\Phi\right)(\xi) = \frac {4}{|1+\xi_{n}|^{2(n+1)}}.
\]
We refer to \cite[Chapter XII]{Ste93} for properties of the Cayley transform.

Denote
\[
L(z)=1+\log\frac{|\bfrho(z,\bfi)|^2}{\bfrho(z)}.
\]
It is obvious that 
\[
l(\xi)=L(\Phi(\xi)).
\]

\begin{lemma}\label{lem:forsupS_z}
For any $c\geq 0$ and any $w\in\calU$, we have
\begin{align*}
 \int_{\calU} &\frac{1}{|\bfrho(w,v)|^{n+1} |\bfrho(v,\bfi)|^{n+1}} L(v)^c \,dV(v) \simeq \frac{1}{|\bfrho(w,\bfi)|^{(n+1)}}  L(w)^{c+1}.
 \end{align*}
The constants depend only on $n$ and $c$.
\end{lemma}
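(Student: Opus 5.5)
We transfer the integral to the unit ball via the Cayley transform $\Phi$ and reduce it to the quantity $J_{a,b,c}$ recorded above from \cite[Theorem 3.1]{ZLSG18}. Write $v=\Phi(\xi)$ and $w=\Phi(\eta)$, and note $\bfi=\Phi(0)$. By the formula for $\bfrho(\Phi(\xi),\Phi(\eta))$,
\[
|\bfrho(w,v)| = \frac{|1-\eta\cdot\overline{\xi}|}{|1+\eta_n||1+\xi_n|},\qquad |\bfrho(v,\bfi)| = \frac{1}{|1+\xi_n|}.
\]
The Jacobian is $(J_R\Phi)(\xi)=4|1+\xi_n|^{-2(n+1)}$, and $L(\Phi(\xi))=l(\xi)$.

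Substituting, the integrand in $\xi$ becomes
\[
\frac{|1+\eta_n|^{n+1}|1+\xi_n|^{n+1}}{|1-\eta\cdot\overline{\xi}|^{n+1}}\,|1+\xi_n|^{n+1}\, l(\xi)^c\cdot\frac{4}{|1+\xi_n|^{2(n+1)}} .
\]
All powers of $|1+\xi_n|$ cancel, since $(n+1)+(n+1)-2(n+1)=0$. The left-hand side therefore equals
\[
4|1+\eta_n|^{n+1}\int_{\ball}\frac{l(\xi)^c}{|1-\eta\cdot\overline{\xi}|^{n+1}}\,dV(\xi)=4|1+\eta_n|^{n+1}J_{0,0,c}(\eta).
\]
Here $a=0>-1$, $b=0$, and $n+1+a+b=n+1$.

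By \cite[Theorem 3.1]{ZLSG18} as quoted, $J_{0,0,c}(\eta)\simeq l(\eta)^{c+1}$, with constants depending only on $n$ and $c$. Since $l(\eta)=L(w)$ and $|1+\eta_n|^{n+1}=|\bfrho(w,\bfi)|^{-(n+1)}$, the right-hand side is $|\bfrho(w,\bfi)|^{-(n+1)}L(w)^{c+1}$. This is exactly the claim.

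The only point needing care is the bookkeeping of the factors $|1+\xi_n|$ and $|1+\eta_n|$. One must check that all $\xi$-dependent factors cancel exactly, so that the weight in $J$ is $(1-|\xi|^2)^0$. One must also confirm that the identity $L(\Phi(\xi))=l(\xi)$ uses the normalization $\bfrho(\Phi(\xi))=(1-|\xi|^2)/|1+\xi_n|^2$. This normalization follows from the displayed formula with $\eta=\xi$. Given these checks, no estimate beyond the cited ball result is needed.
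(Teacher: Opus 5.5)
Your proposal is correct and follows the paper's proof: the Cayley change of variables $v=\Phi(\xi)$, $w=\Phi(\eta)$ reduces the integral to $4|1+\eta_n|^{n+1}J_{0,0,c}(\eta)$. The cited estimate $J_{0,0,c}(\eta)\simeq l(\eta)^{c+1}$, together with $|1+\eta_n|^{n+1}=|\bfrho(w,\bfi)|^{-(n+1)}$ and $l(\eta)=L(w)$, then finishes it. Your explicit verification that all powers of $|1+\xi_n|$ cancel, and that $L\circ\Phi=l$, is exactly the bookkeeping the paper leaves implicit.
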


\begin{proof}
Write $w=\Phi(\eta)$ and make a change of variables $v=\Phi(\xi)$. Then the  integral becomes
\[
4 |1+\eta_n|^{n+1} J_{0,0,c}(\eta) \simeq |1+\eta_n|^{n+1} l(\eta)^{c+1}
=\frac{1}{|\bfrho(w,\bfi)|^{(n+1)}} L(w)^{c+1},
\]
finishing the proof.
\end{proof}

For a real number $t$, let $\mathcal{S}_{t}$ denote the space of all holomorphic functions $f$ on $\calU$ such that
\[
\|f\|_{\calS_t}:=\sup_{z\in\calU} |\bfrho(z,\bfi)|^{t} |f(z)| < \infty.
\]
It is known from \cite[Theorem 4.1]{LS20} that $\mathcal{S}_{n+1}$ is dense in $A^p$ for every $1<p<\infty$.

\begin{lemma}\label{lem:supS_z}
Let $1<p<\infty$ and let $S$ be a finite sum of finite products of Toeplitz operators with bounded symbols. For every $f\in \mathcal{S}_{n+1}$, we have
\begin{itemize} 
\item[(a)]  $\sup\limits_{z\in\mathcal{U}} |S_z f|\in L^p$.
 \item[(b)] $\sup\limits_{z\in\mathcal{U}} |S_z f|\in L_{\alpha/p^\prime}^1$ and $\sup\limits_{z\in\mathcal{U}} |S_z^* f|\in L_{\alpha/p}^1$ for every $-1<\alpha<0$. 
 \end{itemize}
\end{lemma}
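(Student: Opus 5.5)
By Lemma \ref{lem:T_varphi_z}, $S_z$ is again a finite sum of finite products of Toeplitz operators, now with symbols $u_{ij}\circ\sigma_z^{-1}$. These symbols have the same $L^\infty$ norms as the $u_{ij}$. The plan is therefore to find a majorant for $|T_{u_1}\cdots T_{u_m} f|$ that depends only on $\max\|u_i\|_\infty$. Such a majorant is automatically uniform in $z$. Since $S^*$ is a sum of products of $T_{\overline{u}}$ (acting on $A^{p'}$), and since $S_z^*=(S^*)_z$, the same argument will handle $S_z^*$.

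\emph{Step 1: pointwise iteration.} For $|u|\le M$ we have
\[
|T_u g(w)|\le M\int_{\calU}|K(w,v)|\,|g(v)|\,dV(v).
\]
Start from $|f(v)|\le \|f\|_{\calS_{n+1}}|\bfrho(v,\bfi)|^{-(n+1)}=\|f\|_{\calS_{n+1}}|\bfrho(v,\bfi)|^{-(n+1)}L(v)^0$, noting that $L\ge1$ by Lemma \ref{lem:elemtryeq1}. Apply Lemma \ref{lem:forsupS_z} inductively with $c=0,1,\dots,m-1$. This gives
\[
|T_{u_1}\cdots T_{u_m}f(w)|\lesssim M^m\|f\|_{\calS_{n+1}}\,\frac{L(w)^{m}}{|\bfrho(w,\bfi)|^{n+1}}.
\]
Each factor of the kernel contributes $|\bfrho(w,v)|^{-(n+1)}$ up to the constant $n!/(4\pi^n)$, which is exactly the form Lemma \ref{lem:forsupS_z} handles. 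Summing over the finitely many products, every $S_z f$ is dominated by
\[
F(w):=C\,|\bfrho(w,\bfi)|^{-(n+1)}L(w)^{N},
\]
where $C$ and $N$ are independent of $z$. Hence $\sup_z|S_zf|\le F$.

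\emph{Step 2: integrability of $F$.} Pull back by the Cayley transform, writing $w=\Phi(\xi)$. The formulas preceding Lemma \ref{lem:forsupS_z} give
\[
|\bfrho(w,\bfi)|=\frac{1}{|1+\xi_n|}\cdot\frac{1}{2}\ (\text{up to constants}),\qquad \bfrho(w)=\frac{1-|\xi|^2}{|1+\xi_n|^2},\qquad dV=\frac{4\,dV(\xi)}{|1+\xi_n|^{2(n+1)}}.
\]
For (a) we then need
\[
\int_{\ball}|1+\xi_n|^{(n+1)(p-2)}\,l(\xi)^{Np}\,dV(\xi)<\infty.
\]
This is of the form $J_{0,b,Np}$ with $n+1+b=(n+1)(2-p)$, i.e. $b=(n+1)(1-p)<0$, evaluated at $\eta=(0',-1)\in\partial\ball$, and is bounded by \cite[Theorem 3.1]{ZLSG18}. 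For (b), with $\gamma=\alpha/p'\in(-1,0)$, we need $\int F\,\bfrho^{\gamma}\,dV<\infty$. The integrand becomes
\[
(1-|\xi|^2)^{\gamma}\,l(\xi)^N\,|1+\xi_n|^{(n+1)-2\gamma-2(n+1)}.
\]
This is $J_{\gamma,b,N}(\eta)$ with $n+1+\gamma+b=n+1+2\gamma$, i.e. $b=\gamma<0$, so it is again bounded. The $S_z^*$ part is the same computation with $\gamma=\alpha/p$. Here $\alpha<0$ is what makes $b<0$, and $\alpha>-1$ gives $a>-1$.

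\emph{Main obstacle.} Two points need checking. The first is whether Lemma \ref{lem:forsupS_z} can be applied, which requires the first kernel variable to sit correctly. We have $|K(w,v)|=|\bfrho(w,v)|^{-(n+1)}\cdot$const, and $|\bfrho(w,v)|=|\bfrho(v,w)|$, so this is fine. The second, which is the real subtlety, is that $f$ lies only in $\calS_{n+1}$ with respect to $\bfi$. We never translate $f$: the translation is absorbed entirely into the symbols, so no extra $z$-dependence enters. I expect the most care to be needed in verifying the exponent bookkeeping in the Cayley pullback, together with the fact that \cite{ZLSG18} gives boundedness up to $\eta\in\partial\ball$ uniformly when $b<0$.
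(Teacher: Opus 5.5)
Your proposal is correct and follows essentially the same route as the paper. Lemma \ref{lem:T_varphi_z} turns $S_z$ into products with symbols $u_j\circ\sigma_z^{-1}$, and iterating Lemma \ref{lem:forsupS_z} gives the $z$-independent majorant $|\bfrho(w,\bfi)|^{-(n+1)}L(w)^m$. Integrability then follows by pulling back through the Cayley transform to $J_{a,b,c}$ with $b<0$, and $S_z^*$ is handled via $S^*$ being a sum of products of $T_{\overline{u}}$'s.

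One small correction: $L\ge 1$ does not follow from Lemma \ref{lem:elemtryeq1}, which only yields $|\bfrho(z,\bfi)|^2\ge \bfrho(z)/4$. It follows at once from $L\circ\Phi=l=\log\frac{e}{1-|\xi|^2}\ge 1$.
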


\begin{proof}
Fix $f\in \mathcal{S}_{n+1}$. 
By linearity, it suffices to consider a single product
\[
S=T_{u_1}T_{u_2}\cdots T_{u_m}, \quad u_1,u_2,\dots,u_m\in L^{\infty}.
\]
By Lemma \ref{lem:T_varphi_z}, for every $z\in\calU$, we have
\[
S_z=T_{u_1\circ\sigma_z^{-1}}T_{u_2\circ\sigma_z^{-1}}\cdots T_{u_m\circ\sigma_z^{-1}}.
\]

We claim that, for each $w\in\calU$, 
\begin{equation}\label{eq:supS_z}
\sup_{z\in\calU} |S_zf(w)|\lesssim  \left(\prod_{j=1}^m \|u_j\|_{\infty}\right) \|f\|_{\calS_{n+1}} |\bfrho(w,\bfi)|^{-(n+1)}  L(w)^m.
\end{equation}
Once this is proved, set
\[
G_m(w)= |\bfrho(w,\bfi)|^{-(n+1)}  L(w)^m.
\]
Then (a) and (b) follow if we can show that $G_m$ belongs to the required spaces.
By the argument at the beginning of the section, we deduce that, for $1<p<\infty$,
\begin{align*}
\|G_m\|_p^p &=\int_{\calU} \frac{1}{|\bfrho(w,\bfi)|^{p(n+1)}}  L(w)^{pm} \,dV(w)\\
&=4\int_{\ball} \frac{1}{|1+\eta_n|^{(2-p)(n+1)}} l(\eta)^{pm} \,dV(\eta)<\infty,
\end{align*}
and for $-1<\alpha<0$,
\[
\|G_m\|_{1,\alpha/p^\prime} = 4  \int_{\ball} \frac{(1-|\eta|^2)^{\alpha/p^\prime}}{|1+\eta_n|^{n+1+2(\alpha/p^\prime)}} l(\eta)^{m} \,dV(\eta)<\infty.
\]
The same estimate holds for $G_m$ in $L_{\alpha/p}^1$.
For $S_z^*$, simply note that $(T_{u_1}T_{u_2}\cdots T_{u_m})^*=T_{\overline{u_m}}\cdots T_{\overline{u_1}}$, which is straightforward to verify. 

It remains to prove the claim. We proceed by induction.
For $m=1$, Lemma \ref{lem:forsupS_z} gives, for any $z\in\calU$,
\begin{align*}
|T_{u_1\circ\sigma_z^{-1}} f(w)| &\lesssim \|u_1\|_{\infty}\int_{\calU} \frac{|f(v)|}{|\bfrho(w,v)|^{n+1}} \,dV(v)\\
&\leq \|u_1\|_{\infty}  \|f\|_{\calS_{n+1}} \int_{\calU} \frac{dV(v)}{|\bfrho(w,v)|^{n+1} |\bfrho(v,\bfi)|^{n+1}}\\
&\simeq \|u_1\|_{\infty}   \|f\|_{\calS_{n+1}}  |\bfrho(w,\bfi)|^{-(n+1)} L(w).
\end{align*}
Thus \eqref{eq:supS_z} holds for $m=1$. 
Assume that \eqref{eq:supS_z} holds for any product of length $m-1$. Write
$R_z= T_{u_2\circ\sigma_z^{-1}}\cdots T_{u_m\circ\sigma_z^{-1}}$.
Then for any $z\in\calU$,
\begin{align*}
&|S_z f(w)|= |T_{u_1\circ\sigma_z^{-1}} R_z f(w)| \lesssim \|u_1\|_{\infty}\int_{\calU} \frac{|R_zf(v)|}{|\bfrho(w,v)|^{n+1}} \,dV(v)\\
&\lesssim \left(\prod_{j=1}^m \|u_j\|_{\infty}\right) \|f\|_{\calS_{n+1}}
\int_{\calU} \frac{1}{ |\bfrho(w,v)|^{(n+1)} |\bfrho(v,\bfi)|^{n+1}}  L(v)^{m-1}dV(v)\\
&\lesssim \left(\prod_{j=1}^m \|u_j\|_{\infty}\right) \|f\|_{\calS_{n+1}} |\bfrho(w,\bfi)|^{-(n+1)}  L(w)^m,
\end{align*}
where the last inequality uses again Lemma \ref{lem:forsupS_z}.
This completes the proof.
\end{proof}

Let WOT denote the weak operator topology in the space of bounded linear operators on $A^p$.

\begin{lemma}\label{lem:B(T)inC_0}
Let $1<p<\infty$ and $T$ be a bounded linear operator on $A^p$. 
As $z\to\partial\widehat{\calU}$, we have $\widetilde{T}(z)\to 0$ if and only if $T_z\xrightarrow{\text{WOT}} 0$.
\end{lemma}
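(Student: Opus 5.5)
The plan is to prove both implications by working with the matrix coefficients of $T_z$ against normalized kernels. The basic identity is Corollary \ref{cor:U_z^pk_w}, together with its dual version for $p'$: we have $U_z^p k_w^{(p)}=k^{(p)}_{\sigma_z^{-1}(w)}$, and $(U_z^p)^*=(U_z^{p'})^{-1}$. Combining these gives
\[
\langle T_z k_w^{(p)}, k_v^{(p')}\rangle=\langle T k^{(p)}_{\sigma_z^{-1}(w)}, k^{(p')}_{\sigma_z^{-1}(v)}\rangle .
\]
The case $w=v=\bfi$ is \eqref{cor:BerzinT}. The direction ``$T_z\to 0$ in WOT $\Rightarrow \widetilde T(z)\to0$'' is then immediate: \eqref{cor:BerzinT} shows that $\widetilde T(z)$ is a single WOT matrix coefficient of $T_z$.

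For the converse, assume $\widetilde T(z)\to0$ as $z\to\partial\widehat{\calU}$. The operators $T_z$ are uniformly bounded, since $\|T_z\|=\|T\|$. Also, the spans of $\{k_w^{(p)}\}$ and $\{k_v^{(p')}\}$ are dense in $A^p$ and $A^{p'}$, and $A^{p'}$ is the dual of $A^p$ via the pairing, by boundedness of $P$. A standard $\varepsilon/3$ argument therefore reduces the problem to showing
\[
\langle T_z K_w, K_v\rangle\to0 \quad\text{for fixed } w,v\in\calU .
\]
This requires passing from the diagonal (the Berezin transform) to off-diagonal coefficients. I would use the function
\[
F_z(w,\bar v)=\langle T_z K_w, K_v\rangle .
\]
It is holomorphic in $\bar w$ and in $v$, since $K_w(u)=K(u,w)$ is antiholomorphic in $w$ and the pairing is conjugate-linear in the second slot. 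So $F_z$ is a function of $(\bar w,v)$ that is holomorphic on $\overline{\calU}^{*}\times\calU$, where the bar denotes the conjugate domain. Its restriction to the totally real diagonal $v=w$ is
\[
\langle T_zK_w,K_w\rangle=\widetilde{T_z}(w)\,\|K_w\|_p\|K_w\|_{p'} .
\]
By the displayed identity above, $\widetilde{T_z}(w)=\widetilde T(\sigma_z^{-1}(w))$. By Lemma \ref{lem:sigma_ztoinfty}, $\sigma_z^{-1}(w)\to\partial\widehat{\calU}$, so $\widetilde{T_z}(w)\to0$ for every fixed $w$.

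To make the convergence work, I would need to promote pointwise convergence on the diagonal to convergence off it. The family $\{F_z\}$ is locally uniformly bounded: $|F_z|\le\|T\|\,\|K_w\|_p\|K_v\|_{p'}$, which is locally bounded. By Montel's theorem every sequence $z_k\to\partial\widehat{\calU}$ therefore has a subsequence along which $F_{z_k}\to F$ locally uniformly, with $F$ holomorphic in $(\bar w,v)$. From the previous paragraph, $F$ vanishes on the diagonal $\{v=w\}$. A function holomorphic in $(\bar w,v)$ that vanishes on $\{v=w\}$ vanishes identically: expand in power series at a diagonal point, and note that the polarized function is determined by its restriction to the maximal totally real set $\{(\bar w, w)\}$. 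This is the same polarization fact that gives injectivity of the Berezin transform. Hence $F\equiv0$. Since every subsequence has a further subsequence converging to $0$, the whole family satisfies $F_z(w,\bar v)\to0$ for fixed $w,v$. By the density reduction, this gives $T_z\to 0$ in WOT.

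The main obstacle is the Montel/polarization step. The first issue is checking the local boundedness of $\|K_w\|_p=C_p\bfrho(w)^{-(n+1)/p'}$ on compact sets, which is clear. The second is stating the identity-theorem fact correctly in the two-variable form. For the latter I would pass to the function $G(\zeta,v)=F(\bar\zeta, v)$ of $(\zeta,v)\in\calU\times\calU$: it is holomorphic in $v$ and antiholomorphic in $\zeta$, and vanishes when $\zeta=v$. Setting $H(\xi,v)=G(\bar\xi,v)$ gives a holomorphic function on $\calU^{*}\times\calU$ vanishing on $\{\xi=\bar v\}$, which is totally real of maximal dimension. Hence $H\equiv0$.
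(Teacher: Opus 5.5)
Your proof is correct and follows essentially the same route as the paper: a subsequence compactness argument, the identity $\widetilde{T_z}(w)=\widetilde{T}(\sigma_z^{-1}(w))$ combined with Lemma \ref{lem:sigma_ztoinfty}, and polarization. The only difference is where compactness is applied: you apply Montel to the scalar kernels $\langle T_zK_w,K_v\rangle$, which are jointly holomorphic in $(\bar w,v)$ by separate holomorphy plus your local bound, and then polarize the limit function; the paper instead extracts a WOT-limit operator $T'$ from the bounded family $\{T_{z_k}\}$ and uses injectivity of the Berezin transform.
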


\begin{proof}
The sufficiency is immediate from \eqref{cor:BerzinT}. It suffices to prove the necessity. Suppose that $\widetilde{T}(z)$ vanishes on $\partial\widehat{\calU}$ and, to the contrary, that there exists $f\in A^p$ and $g\in A^{p^\prime}$ such that 
\[
\limsup_{z\to\partial\widehat{\calU}} |\langle T_z f,g \rangle|>0.
\]
Thus there exists a sequence $\{z_k\}$ tending to $\partial\widehat{\calU}$ such that
\begin{equation}\label{eq:B(T)inC_0}
\lim_{k\to\infty} |\langle T_{z_k} f,g \rangle|>0.
\end{equation}
Since $\|T_{z_k}\|=\|T\|$ and both $A^p$ and $A^{p^\prime}$ are separable,
a diagonal argument on countable dense subsets, followed by reflexivity of $A^p$, yields a WOT-convergent subsequence. 
Passing to another subsequence if necessary, we may assume that
\[
T^\prime=\text{WOT}-\lim_{k\to\infty} T_{z_k}.
\]
Then for every $w\in\calU$, by Corollary \ref{cor:U_z^pk_w}, we have
\begin{align*}
\widetilde{T^\prime}(w) &= \lim_{k\to\infty} \langle T_{z_k} k_w^{(p)},k_w^{(p^\prime)} \rangle=\lim_{k\to\infty} \langle T U_{z_k}^p k_w^{(p)}, U_{z_k}^{p^\prime} k_w^{(p^\prime)} \rangle\\
&= \lim_{k\to\infty} \langle  T k_{\sigma_{z_k}^{-1}(w)}^{(p)}, k_{\sigma_{z_k}^{-1}(w)}^{(p^\prime)} \rangle
= \lim_{k\to\infty} \widetilde{T}(\sigma_{z_k}^{-1}(w)).
\end{align*}
This together with Lemma \ref{lem:sigma_ztoinfty} yields
\[
\widetilde{T^{\prime}}(w)=0
\]
for every $w\in\calU$. Since the Berezin transform is injective, it follows that $T^\prime=0$,
which contradicts \eqref{eq:B(T)inC_0}. This completes the proof of the lemma.
\end{proof}

Now, we are ready to prove Theorem \ref{thm:main}.

\begin{proof}[Proof of Theorem \ref{thm:main}]

(i) Let $T\in\mathcal{T}_p$. Then for any given $\varepsilon>0$, there exists $S$ which is a finite sum of finite products of Toeplitz operators with bounded symbols such that 
\[
\|T-S\|_{e, A^p \to A^p}\leq \|T-S\|<\varepsilon.
\]
Since $K_\bfi \in \mathcal{S}_{n+1}$, Theorem \ref{thm:essenstial} together with (b) of Lemma \ref{lem:supS_z} yields
\[
\|S\|_{e, A^p \to A^p} \simeq \sup_{\|f\|_p\leq 1} \limsup_{z\to\partial\widehat{\calU}} \|S_z f\|_p.
\]
Consequently, 
\[
\|T\|_{e, A^p \to A^p} \leq \|T-S\|_{e, A^p \to A^p} +\|S\|_{e, A^p \to A^p} \lesssim \varepsilon + \sup_{\|f\|_p\leq 1} \limsup_{z\to\partial\widehat{\calU}} \|S_z f\|_p.
\]
On the other hand, for every $f\in A^p$ with $\|f\|_p\leq 1$, 
\[
\|S_z f\|_p\leq \|(T-S)_z f\|_p +\|T_z f\|_p <\varepsilon + \|T_z f\|_p.
\]
A combination of these estimates gives 
\[
\|T\|_{e, A^p \to A^p} \lesssim \sup_{\|f\|_p\leq 1} \limsup_{z\to\partial\widehat{\calU}} \|T_z f\|_p+\varepsilon.
\] 
The reverse was already obtained in the first paragraph of the proof of Theorem \ref{thm:essenstial}.

(ii) The necessity follows immediately from Lemma \ref{lem:k_zweak}. 
For sufficiency, in view of (i), it is enough to show that
\[
\lim_{z\to \partial\widehat{\calU}} \|T_z f\|_p =0 
\]
for every $f\in A^p$.
By the density of $\mathcal{S}_{n+1}$ in $A^p$ and the fact that $\|T_z\|=\|T\|$, it suffices to prove the vanishing for $f\in\mathcal{S}_{n+1}$.

Assume now that $\widetilde{T}(z)$ vanishes on $\partial\widehat{\calU}$ and fix $f\in \mathcal{S}_{n+1}$ with $\|f\|_p\leq 1$. 
Write
\[
\|T_z f\|_p^p = \int_{D(\bfi,r)} |T_z f(w)|^p \,dV(w) + \int_{D(\bfi,r)^c} |T_z f(w)|^p \,dV(w),
\]
where $r$ will be chosen later.
By Lemma \ref{lem:B(T)inC_0}, we have $T_z f \to 0$ weakly in $A^p$ as  $z\to \partial\widehat{\calU}$.
Consequently, Lemma \ref{lem:weak,convergence} implies that  $T_z f \to 0$ uniformly on each compact subset of $\calU$. 
Hence, the first integral converges to 0 as $z\to \partial\widehat{\calU}$.

For any given $\varepsilon>0$, pick a finite sum of finite products of Toeplitz operators with bounded symbols $S$ such that $\|T-S\|<\varepsilon^{1/p}$.
Since $S$ satisfies (a) of Lemma \ref{lem:supS_z}, we can choose $r$ large enough such that 
\[
\int_{D(\bfi,r)^c} |S_z f(w)|^p \,dV(w) \leq \int_{D(\bfi,r)^c} \sup_{z\in\calU} |S_z f(w)|^p \,dV(w) <\varepsilon
\]
for all $z\in\calU$. Thus, 
\begin{align*}
&\int_{D(\bfi,r)^c} |T_z f(w)|^p \,dV(w) \\
&\leq 2^{p-1} \left(\int_{D(\bfi,r)^c} |(T-S)_z f(w)|^p \,dV(w)+\int_{D(\bfi,r)^c} |S_z f(w)|^p \,dV(w) \right) \\
&\leq 2^{p-1} \left( \|(T-S)_z f \|_p^p+\varepsilon\right) \leq 2^{p-1} \left(\|T-S\|^p+\varepsilon \right)<2^p \varepsilon
\end{align*}
for all $z\in\calU$.
Since $\varepsilon$ is arbitrary, we conclude that
\[
\lim_{z\to \partial\widehat{\calU}} \|T_z f\|_p =0 
\]
for every $f\in\mathcal{S}_{n+1}$, completing the proof of the theorem.
\end{proof}

\end{document}